\documentclass[10pt]{article}
\usepackage{geometry}                % See geometry.pdf to learn the layout options. There are lots.
\geometry{letterpaper}                   % ... or a4paper or a5paper or ...
\usepackage{graphicx}
\usepackage{amssymb,amsthm,amsmath}
\usepackage{epstopdf}
\usepackage{pdfsync}

%%%%%%%%%%%%%%%%%%%%%%%%%%%%%%%%%%%%%%%%%%%%%%%%%%%%%%%%
\usepackage{color}

%%%%%%%%%%%%%%%%%%%%%%%%%%%%%%%%%%%%%%%%%%%%%%%%%%%%%%%%%%%%%%%%%%%%

\DeclareGraphicsRule{.tif}{png}{.png}{`convert #1 `dirname #1`/`basename #1 .tif`.png}

\title{Homogenization of quadratic convolution energies\\
in periodically perforated domains}
\author{
{\sc Andrea Braides
%\footnote{corresponding author -- e-mail: {\tt braides@mat.uniroma2.it}
%tel.: (+39) 0672574688, fax:  (+39) 0672574699}
}
\\ Dipartimento di Matematica,
 Universit\`a di Roma `Tor Vergata'\\
via della Ricerca Scientifica, 00133 Rome, Italy\\
\and
{\sc Andrey Piatnitski}
\\
The Arctic University of Norway, UiT,  Campus
Narvik,\\ P.O. Box 385, Narvik 8505, Norway \\ and \\ Institute for Information Transmission Problems
of RAS,\\ 127051 Moscow, Russia %\ \ \ \ email: \emph{apiatnitski@gmail.com}
\\
%\\ Department of Mathematics, Narvik University College,
%\\ HiN, Postbox 385, 8505 Narvik, Norway
%\\ and P.N. Lebedev Physical Institute
%\\ RAS 53 Leninski prospect, Moscow 119991, Russia
}
\date{%\today
}                                      % Activate to display a given date or no date

\newtheorem{definition}{Definition}[section]
\newtheorem{lemma}[definition]{Lemma}
\newtheorem{theorem}[definition]{Theorem}
\newtheorem{proposition}[definition]{Proposition}

\newtheorem{remark}[definition]{Remark}
\newtheorem{example}[definition]{Example}

\def\e{\varepsilon}
\def\dx{\,dx}

\def\ZZ{\mathbb{Z}}
\def\NN{\mathbb{N}}
\def\rr{\mathbb{R}}

\def \trait (#1) (#2) (#3){\vrule width #1pt height #2pt depth #3pt}
\def \qed{\hfill
        \trait (0.1) (6) (0)
        \trait (6) (0.1) (0)
        \kern-6pt
        \trait (6) (6) (-5.9)
        \trait (0.1) (6) (0)
\medskip}

\begin{document}
\maketitle

\noindent
{\bf Abstract.}
We prove a homogenization theorem for quadratic convolution energies defined in perforated domains.
The corresponding limit is a Dirichlet-type quadratic energy, whose integrand is defined by a non-local cell-problem formula.
The proof relies on an extension theorem from perforated domains belonging to a wide class containing compact periodic perforations.

\bigskip

\noindent
{\bf Keywords.} Homogenization, convolution functionals, perforated domains, extension theorem, non-local energies
\smallskip
\noindent
{\bf AMS Classifications.} 49J45, 49J55, 74Q05, 35B27, 35B40, 45E10

\section{Introduction}
In this paper we perform a limit analysis for a class of convolution functionals that may be interpreted as describing macroscopic features of  biological systems. Indeed, the study of macroscopic properties for complex biological systems and models of population dynamics can be reduced to studying the evolution of the so-called one-point correlation function describing the population density in the system. An important feature of the corresponding equation is that it is nonlocal with respect to spatial variables, and that the nonlocal operator is of convolution type, see \cite{KKP2008, FKK2012} for further details.
On a fixed domain $\Omega$ the energy of such a system in the stationary regime is given by
\begin{eqnarray}\label{def-Fe-solid}
{1\over \e^{d+2}}\int_{\Omega\times\Omega} a\Bigl({y-x\over\e}\Bigr)
(u(y)-u(x))^2dy \dx,
\end{eqnarray}
where $a$ is a convolution kernel with good integrability properties.

We note that if $u\in C^1(\Omega)$ then
$u(y)-u(x)\approx\langle\nabla u(x),y-x\rangle$ and, using the change of variables $y=x+\e\xi$,
\begin{eqnarray}\label{def-Fe-solid-2}
{1\over \e^{d+2}}\int_{\Omega\times\Omega} a\Bigl({y-x\over\e}\Bigr)
(\langle\nabla u(x),y-x\rangle)^2dy \dx\longrightarrow \int_{\Omega} \int_{\rr^d}a(\xi)
(\langle\nabla u(x),\xi\rangle)^2d\xi \dx,
\end{eqnarray}
as $\e\to 0$, so that the quadratic functional
 \begin{eqnarray}\label{def-Fe-solid-3}
 \int_\Omega \langle A\nabla u,\nabla u\rangle \dx, \qquad\hbox{ with }\qquad
 \langle A z, z\rangle= \int_{\rr^d}a(\xi)
(\langle z,\xi\rangle)^2d\xi,
\end{eqnarray}
gives an approximation of \eqref{def-Fe-solid}, or, conversely, \eqref{def-Fe-solid} gives a more general form of quadratic energies allowing for interactions between points at scale $\e$. In terms of $\Gamma$-convergence this computation can be reworked as a $\Gamma$-limit and the corresponding convergence of minumum problems. The convergence of the corresponding operators, also with periodic perturbations of the convolution kernel, has been studied in \cite{PZ}.

In the case of inhomogeneous media with a periodic microstructure and with zones where we do not have interactions, the model may be set in a perforated domain, where the energies are integrated only on the complement of a `perforation'. In the corresponding equations we will obtain a homogeneous (non-local) Neumann boundary condition on the perforation.
%In this paper we consider quadratic convolution energies defined on perforated domains; i.e., domains obtained by removing a periodic set from an open set $\Omega\subset \rr^d$.
A general periodically perforated domain is obtained by intersecting $\Omega$ with a periodic connected Lipschitz set $E_\delta=\delta E$ with small period $\delta$. In the case of energies of convolution type the relevant scale of the period $\delta$ is of order $\e$. Indeed, otherwise, if $\e<\!<\delta$ then we may apply a separation of scales argument and reduce to the classical problem of the asymptotic description of perforated domain for the quadratic form \eqref{def-Fe-solid-3}, while if $\delta<\!<\e$ the effect of the perforation is averaged out giving as a limit the energy in \eqref{def-Fe-solid-3} multiplied by the constant $|E\cap(0,1)^d|$ (i.e., the constant density of the weak limit of $\chi_{\delta E}$). We will then directly consider energies of the form
\begin{eqnarray}\label{def-Fe-perforated}
F_\e(u)={1\over \e^{d+2}}\int_{(\Omega\cap E_\e)\times(\Omega\cap E_\e)} a\Bigl({y-x\over\e}\Bigr)
(u(y)-u(x))^2dy \dx
\end{eqnarray}
(i.e., with $\delta=\e$). In order to avoid technicalities, we will restrict to the simplified case of `compact perforations'; i.e., when
\begin{eqnarray}\label{def-perforation}
E=\rr^d\setminus (K_0+\ZZ^d),
\end{eqnarray}
where $K_0$ is a Lipschitz compact set such that $(K_0+j)\cap (K_0+j')=\emptyset$ if $j,j'\in \ZZ^d$ and $j\neq j'$.

In the asymptotic analysis of usual (quadratic) integral functionals on perforated domains one considers energies
\begin{eqnarray}\label{def-Fe-perforated-solid}
E_\e(u)=\int_{\Omega\cap E_\e}\langle A\nabla u,\nabla u\rangle \dx
\end{eqnarray}
or the corresponding elliptic differential operators with Neumann conditions on the perforation boundary.
Early homogenization results for such operators were obtained in \cite{MK,K,CS,V}, from which many other works on the topic followed.
The key argument in the asymptotic analysis both of energies and operators is an extension lemma, which allows to extend functions in $H^1(\Omega\cap E_\e)$ to functions in $H^1(\Omega')$ for $\Omega'\subset\subset \Omega$ controlling the $H^1$ and $L^2$ norms of the extension with those of the original function in the perforated domain independently of $\e$ (see \cite{ACDP,BDF}). In this way we may define an $L^2_{\rm loc}$-limit of sequences of functions with bounded energy as the limit of their extensions. Our first result is a similar Extension Theorem, which can be stated as follows:  for a fixed $r_0>0$ there exist $r>0$ and a constant $C$ such that for every fixed $\Omega'$ and for each function $u$ with bounded energy there exists an extension $v$ of function $u$ that  satisfies for all sufficiently small $\varepsilon$ the inequality
\begin{equation}\label{dva-intro}
\int_{(\Omega'\times\Omega')\cap\{|x-y|\le \e r\}}
(v(y)-v(x))^2 dy \dx\le C \int_{(\Omega\cap E_\e)\times(\Omega\cap E_\e)\cap\{|y-x|<\e r_0\}}(u(y)-u(x))^2dy \dx\,.
\end{equation}
The construction of the extended function $v$ can be achieved by a `reflection' argument close to $\partial E_\e$. Note however that the estimate of the energy is a little trickier than in the usual `local' case, since for the extended function we will have interactions between the function inside and outside the perforation.

Under the assumption that
\begin{eqnarray}\label{bou_a}
\hbox{there exist a constant $c>0$ and $r_0>0$ such that
 }a(z)\ge c \hbox{ if }|z|\le r_0
\end{eqnarray}
the term on the right-hand side of \eqref{dva-intro} can be estimated in terms of $F_\e(u)$.
Note that if hypothesis \eqref{bou_a} fails, the set of points $(x,y)\in E\times E$ with $a(y-x)\neq 0$ may be composed of disconnected components and as a result no controlled extension be possible.

Since, unlike for the usual Dirichlet integrals, inequality \eqref{dva-intro} does not imply the boundedness of the extensions in $H^1$, in order to conclude the argument, the extension theorem must be coupled with a Compactness Result for non-perforated energies (see \cite{2019BP}), now applied to the extended functions, which allows to conclude that from each sequence with equi-bounded energy we can extract a subsequence such that the corresponding extensions converge in $L^2_{\rm loc}$ to some $u\in H^1(\Omega)$.
%The proof of this result follows closely that of the compactness result for non-linear convolution energies used to approximate Free-Discontinuity Problems obtained by Gobbino \cite{Go,LN98}.

Once this result is obtained, we can compute the $\Gamma$-limit of $F_\e$ with respect to the convergence described above.
The limit is a `classical' local quadratic energy
\begin{equation}\label{homfuneq-intro}
F_{\rm hom}(u)=\int_\Omega \langle A_{\rm hom}\nabla u,\nabla u\rangle\dx,
\end{equation}
with domain $H^1(\Omega)$, where $A_{\rm hom}$ is a symmetrix matrix
given by the cell-problem formula
\begin{equation}\label{cpformula-intro}
 \langle A_{\rm hom}z,z\rangle=\inf\Bigl\{\int_{(0,1)^d\cap E}\int_{E} a(\xi)(w(y+\xi)-w(y))^2 d\xi\,dy:
 w(y)-\langle z,y\rangle\hbox{ is $1$-periodic}\Bigr\}.
\end{equation}

It must be noted that in formula \eqref{cpformula-intro} the inner integral is performed on the whole $E$, highlighting that long-range interactions cannot be neglected. The treatment of these long-range interactions is the source of most of the technical points in the proof of the homogenization results. In order to control them we have to make some assumption on the decay of $a$, which we state as
 \begin{eqnarray}\label{ass-aa}
0 \le a(\xi)\le C {1\over (1+|\xi|)^{d+2+\kappa}}.
\end{eqnarray}
for some $\kappa>0$. Note that this is slightly more restrictive that assuming the finiteness of second moments of $a$, which is a necessary condition by \eqref{def-Fe-solid-3}.

This homogenization theorem is achieved by  first using the blow-up method of Fonseca and M\"uller \cite{FM,BMS}, and then reducing to one-periodic interactions by a convexity argument. The use of formula \eqref{cpformula-intro} allows the construction of recovery sequence for which we can control long-range interactions. It is worth remarking that the use of the blow-up method is possible thanks to the `vanishing non-locality' of the energies. We note the analogy with the results on `perforated' discrete domains (i.e., defined on scaled periodic lattices with some missing sites) studied in \cite{2015BCP} Section 3.

\section{Setting of the problem}

We consider a convolution kernel $a:\rr^d\to \rr^+$ with $ \int_{\rr^d}a(\xi)\,d\xi>0$ and such that \eqref{ass-aa} holds
%\begin{eqnarray}\label{ass-a}
%0 \le a(\xi)\le C {1\over (1+|\xi|)^{d+2+\kappa}}.
%\end{eqnarray}
for some $\kappa>0$.

We suppose that

\def\Kz{K_0}
$\bullet$  $\Kz$ is a compact subset of $\rr^d$ with Lipschitz boundary
such that $(\Kz+j)\cap(\Kz+j')=\emptyset$ if $j,j'\in\ZZ^d$ and $j\neq j'$.

We then define the {\em perforated domain}
\begin{equation}
E=\rr^d\setminus (\Kz+\ZZ^d).
\end{equation}

Let $\Omega$ be an open subset of $\rr^d$. For all $\e>0$ and $u\in L^2(\Omega\setminus \e E)$ we set
\begin{eqnarray}\label{def-Fe-0}
F_\e(u)&=&{1\over \e^{d+2}}\int_{\Omega\cap\e E}\int_{\Omega\cap\e E} a\Bigl({y-x\over\e}\Bigr)
(u(y)-u(x))^2dy \dx.
\end{eqnarray}
Equivalently, we will write
\begin{eqnarray}\label{def-Fe}\nonumber
F_\e(u)&=&{1\over \e^{d}}\int_{\Omega\cap\e E}\int_{\Omega\cap\e E} a\Bigl({y-x\over\e}\Bigr)
\Bigl({u(y)-u(x)\over\e}\Bigr)^2dy \dx\\ \nonumber
&=&{1\over \e^{d+2}}\int_{\Omega\cap\e E}\int_{\Omega\cap\e E-x} a\Bigl({\xi\over\e}\Bigr)
(u(x+\xi)-u(x))^2d\xi \dx
\\
&=&\int_{\Omega\cap\e E}\int_{{1\over\e}(\Omega\cap\e E-x)} a(\xi)
\Bigl({u(x+\e\xi)-u(x)\over\e}\Bigr)^2d\xi \dx,
\end{eqnarray}
this last version being useful when integrability properties of $a$ are used.

\bigskip
In the following we will prove extension, compactness and convergence properties for the functionals $F_\e$ under
hypothesis \eqref{bou_a}.
%the following hypothesis:
%\begin{equation}\label{bou_a}
%\hbox{there exist a constant $c>0$ and $r_0>0$ such that
% }a(z)\ge c \hbox{ if }|x|\le r_0.
%\end{equation}
Note that if such a hypothesis is removed the functionals $F_\e$ may be `degenerate', as shown in Example \ref{degex}.

\subsection{Notation}
Unless otherwise stated $C$ denotes a generic strictly positive constant independent of the parameters of the problem taken into account.

$Q_T=[-{T\over2},{T\over2}]^d$ denotes the $d$-dimensional coordinate cube centered in $0$ and with side-length~$T$.

$\lfloor t\rfloor$ denotes the integer part of $t\in\rr$.

$\chi_A$ denotes the characteristic function of the set $A$.

For all $t>0$ we denote
$$\Omega(t)=\{x\in\Omega: {\rm dist}(x,\partial \Omega)>t\}.$$

\subsection{Preliminaries for `solid domains'}
In this section we consider `solid domains'; i.e., the case when the perforation is not present.
In the following we will extend functions from $\Omega\cap \e E$ to the whole $\Omega$ maintaining
the boundedeness of some convolution energy. In this way we will be able to use arguments for functions defined on solid domains.

\subsubsection{A Compactness Theorem}
Let $\Omega$ be an open set with Lipschitz boundary.  The following result, proved in \cite{2019BP} show that families of functions  that have bounded energies of the type \eqref{def-Fe-solid} is compact in $L^2_{\rm loc}(\Omega)$.

%We show that families of functions  that have bounded energies of the type \eqref{def-Fe-solid} is compact in $L^2_{\rm loc}(\Omega)$.
%To this end we define the functional
%$$
%F_\e^{\sigma,r}(w)=\int_{\Omega(\sigma)}\int_{\{|\xi|\leq r\}}
%\Bigl({w(x+\e\xi)-w(x)\over\e}\Bigr)^2d\xi \dx, \quad w\in L^2(\Omega).
%$$
%In the case when $\Omega=\rr^d$ the $L^1_{\rm loc}$-compactness can be directly obtained by comparison with finite-difference energies approximating the Mumford-Shah functional studied by Gobbino \cite{Go}. Here we follow his proof, to deduce the $L^2_{\rm loc}$-compactness.

\begin{theorem}[compactness theorem]\label{t_comp}
Let $\Omega$ be an open set with Lipschitz boundary, and assume that for a family $\{w_\e\}_{\e>0}$, $w_\e\in L^2(\Omega)$,
the estimate  \begin{equation}\label{energ_bou}
F_\e^{k\e,r}(w_\e):=\int_{\Omega(k\e)}\int_{\{|\xi|\leq r\}}
%_{\begin{array}{c}\\[-5.5mm]\scriptstyle|\xi|\le r,\\[-0.9mm]\scriptstyle x+\e\xi\in\Omega\end{array}}
\Bigl({w_\e(x+\e\xi)-w_\e(x)\over\e}\Bigr)^2d\xi \dx\le C
\end{equation}
is satisfied with some  $k>0$ and $r>0$.
Assume moreover that the family $\{w_\e\}$ is bounded in $L^2(\Omega)$.
Then for any sequence ${\e_j}$ such that $\e_j>0$ and $\e_j\to0$, as $j\to\infty$,  and for any open subset
$\Omega'\Subset\Omega$  the set $\{w_{\e_j}\}_{j\in\mathbb N}$ is relatively compact in $L^2(\Omega')$  and every its limit point is in $H^1(\Omega)$.
\end{theorem}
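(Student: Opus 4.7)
The plan is to reduce the statement to the classical Rellich--Kondrachov theorem via a mollification argument at the natural scale $\e$. Fix a nonnegative $\rho\in C^\infty_c(B_r(0))$ with $\int\rho\,d\xi=1$ and set
\[
v_\e(x)=\int_{B_r(0)}\rho(\xi)\,w_\e(x+\e\xi)\,d\xi,
\]
which is well-defined whenever $x\in\Omega(r\e)$. I would prove (i) $\|v_\e-w_\e\|_{L^2(\Omega')}\to 0$, and (ii) $v_\e$ is bounded in $H^1(\Omega'')$ for every $\Omega'\Subset\Omega''\Subset\Omega$. Rellich then gives strong $L^2(\Omega')$-precompactness for $v_\e$, which transfers to $w_\e$ by (i); a weak limit of $v_{\e_j}$ in $H^1$ must agree with the $L^2$-limit, hence any limit point sits in $H^1_{\rm loc}$, and a diagonal argument over an exhaustion promotes this to $H^1(\Omega)$.

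For (i), since $\int\rho=1$,
\[
v_\e(x)-w_\e(x)=\int_{B_r}\rho(\xi)\bigl(w_\e(x+\e\xi)-w_\e(x)\bigr)\,d\xi,
\]
so Cauchy--Schwarz in $\xi$ gives $|v_\e(x)-w_\e(x)|^2\le \|\rho\|_\infty^2 |B_r|\int_{|\xi|\le r}(w_\e(x+\e\xi)-w_\e(x))^2 d\xi$, and integrating over $x\in\Omega((k+r)\e)$ produces
\[
\int_{\Omega((k+r)\e)}|v_\e-w_\e|^2\dx\le C\e^{2}\,F_\e^{k\e,r}(w_\e)=O(\e^2).
\]
For (ii), the identity $\int\nabla\rho=0$ lets me rewrite
\[
\nabla v_\e(x)=\int_{B_r}\nabla\rho(\xi)\,\frac{w_\e(x+\e\xi)-w_\e(x)}{\e}\,d\xi,
\]
whence by the same Cauchy--Schwarz step
\[
\int_{\Omega((k+r)\e)}|\nabla v_\e|^2\dx\le\|\nabla\rho\|_\infty^2 |B_r|\,F_\e^{k\e,r}(w_\e)\le C.
\]
Together with the bound $\|v_\e\|_{L^2}\le\|\rho\|_{L^1}\|w_\e\|_{L^2(\Omega)}\le C$, this yields $v_\e$ bounded in $H^1(\Omega'')$ once $\e$ is small enough that $\Omega''\subset\Omega((k+r)\e)$.

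Conclusion and subtleties: Rellich gives a subsequence $v_{\e_j}\to w$ in $L^2(\Omega')$ and weakly in $H^1(\Omega'')$; by (i) the same subsequence of $w_{\e_j}$ converges to $w$ in $L^2(\Omega')$. Performing this on an increasing exhaustion $\Omega'_n\Subset\Omega''_n\Subset\Omega$ and diagonalizing, weak lower semicontinuity yields $\int_{\Omega''_n}|\nabla w|^2\dx\le\liminf_j\int_{\Omega''_n}|\nabla v_{\e_j}|^2\dx\le C$ uniformly in $n$, and monotone convergence upgrades this to $w\in H^1(\Omega)$. The only mildly delicate points are boundary-layer bookkeeping (the energy lives on $\Omega(k\e)$, and mollification costs another $r\e$, but both layers shrink to $\partial\Omega$, so any fixed $\Omega''\Subset\Omega$ is eventually safe) and the promotion from $H^1_{\rm loc}$ to $H^1(\Omega)$; both are handled by the uniformity of the constants in (i) and (ii), which depend only on $k,r,\rho$, and the $L^2$-bound of the family.
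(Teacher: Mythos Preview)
The paper does not actually prove this theorem: it is stated as a known result and attributed to \cite{2019BP}, so there is no in-paper proof to compare against. Your mollification-at-scale-$\e$ argument is a standard and correct route to this type of compactness statement: the key identities $v_\e-w_\e=\int\rho(\xi)(w_\e(\cdot+\e\xi)-w_\e)\,d\xi$ and $\nabla v_\e=\int\nabla\rho(\xi)\,\e^{-1}(w_\e(\cdot+\e\xi)-w_\e)\,d\xi$ (the latter using $\int\nabla\rho=0$) convert the finite-difference energy bound into, respectively, $\|v_\e-w_\e\|_{L^2(\Omega')}=O(\e)$ and a uniform $H^1(\Omega'')$ bound on $v_\e$, after which Rellich and a straightforward exhaustion/diagonal argument yield both the $L^2_{\rm loc}$ compactness and the $H^1(\Omega)$ regularity of limit points. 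The bookkeeping you flag (the $k\e$ versus $(k+r)\e$ boundary layers, and the passage from $H^1_{\rm loc}$ to $H^1(\Omega)$ via the $\Omega''$-independent gradient bound) is handled correctly.
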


\subsubsection{Treatment of boundary data}
In this section we prove a classical lemma that allows to match boundary data. For future reference we prove it for general integrands $b_\e$ which only satisfy an estimate from above. In the following it will be applied to
\begin{equation}\label{be=a}
b_\e(x,y)= b(x,y)=\chi_{E}(x)\chi_{E}(y) a(x-y).
\end{equation}

With this requirement of generality in mind, let $b_\e:\Omega\times\Omega\to\rr$ be Borel functions satisfying
\begin{eqnarray}\label{ass-a}
0 \le b_\e(x,y)\le C {1\over (1+|x-y|)^{d+2+\kappa}}.
\end{eqnarray}
and set
\begin{eqnarray}\label{def-Fe-b}
F^b_\e(u)={1\over \e^{d+2}}\int_{\Omega}\int_{\Omega} b_\e\Bigl({x\over\e},{y\over\e}\Bigr)
(u(y)-u(x))^2\,dy \dx.
\end{eqnarray}

\def\Domain{\Omega}

\begin{proposition}[treatment of boundary values]\label{bvp}
Let $A$ be a bounded open set with Lipschitz boundary, let $v_\eta\to v$ in $L^2(A)$ with $v\in H^1(A)$.
%, and let
%$w_\eta$ converge weakly to $v$ in $H^1(A)$ and
%satisfy
%$$
%\limsup_{\eta\to 0}\int_{U}\int_{\{\xi: x+\eta\xi\in U\}} {1\over(1+|\xi|)^{d+2+\kappa}}
%\Bigl({w_\eta(x+\eta\xi)-w_\eta(x)\over\eta}\Bigr)^2d\xi \dx\le \omega (|U|)
%$$
%for all open sets $U$, where $\omega(t)\to 0$ as $t\to 0$.
For every $\delta>0$ there exist $v^\delta_\eta$ converging to $v$ in $L^2(A)$ such that
$$
v^\delta_\eta= v \hbox{ in } A\setminus A(\delta), \qquad v^\delta_\eta= v_\eta \hbox{ in } A(2\delta)
$$
and
$$
\limsup_{\eta\to 0} (F^b_\eta (v^\delta_\eta)- F^b_\eta(v_\eta))\le o(1)
$$
as $\delta\to 0$.
\end{proposition}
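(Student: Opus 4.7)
The plan is to use a single smooth cut-off to glue $v_\eta$ and $v$, and to bound the resulting energy increment through a convexity inequality combined with the second-moment integrability of $b_\eta$ built into the decay assumption \eqref{ass-a}.

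Fix $\varphi_\delta\in C^\infty_c(A)$ with $\varphi_\delta\equiv 1$ on $A(2\delta)$, $\varphi_\delta\equiv 0$ on $A\setminus A(\delta)$, and $\|\nabla\varphi_\delta\|_\infty\le C/\delta$, and set $v^\delta_\eta:=\varphi_\delta v_\eta+(1-\varphi_\delta)v$. The two boundary equalities in the statement are immediate and $\|v^\delta_\eta-v\|_{L^2(A)}\le\|v_\eta-v\|_{L^2(A)}\to 0$. Starting from the asymmetric pointwise identity
\[
v^\delta_\eta(y)-v^\delta_\eta(x)=\varphi_\delta(x)[v_\eta(y)-v_\eta(x)]+(1-\varphi_\delta(x))[v(y)-v(x)]+(\varphi_\delta(y)-\varphi_\delta(x))[v_\eta(y)-v(y)],
\]
I square and apply the convexity bound $\bigl(\varphi_\delta(x)p+(1-\varphi_\delta(x))q\bigr)^2\le \varphi_\delta(x)p^2+(1-\varphi_\delta(x))q^2$ (valid since $\varphi_\delta(x)\in[0,1]$) to the first two terms, together with $(P+Q)^2\le(1+\alpha)P^2+(1+\alpha^{-1})Q^2$ for an auxiliary parameter $\alpha>0$; integrating against $\eta^{-d-2}b_\eta(x/\eta,y/\eta)$ and using $\varphi_\delta(x)\le 1$ to absorb the $v_\eta$-piece into $F^b_\eta(v_\eta)$ yields
\[
F^b_\eta(v^\delta_\eta)-F^b_\eta(v_\eta)\le\alpha\, F^b_\eta(v_\eta)+(1+\alpha)\,J_2+(1+\alpha^{-1})\,J_3,
\]
where $J_2:=\eta^{-d-2}\int\int b_\eta(1-\varphi_\delta(x))(v(y)-v(x))^2\,dxdy$ and $J_3:=\eta^{-d-2}\int\int b_\eta(\varphi_\delta(y)-\varphi_\delta(x))^2(v_\eta(y)-v(y))^2\,dxdy$.

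The estimate $|\varphi_\delta(y)-\varphi_\delta(x)|\le C|y-x|/\delta$ and the change of variable $y-x=\eta\xi$ reduce the inner integral in $J_3$ to $\int|\xi|^2\,b_\eta(x/\eta,x/\eta+\xi)\,d\xi$, which is finite and uniform in $x,\eta$ thanks to \eqref{ass-a} (because $\kappa>0$); hence $J_3\le C\delta^{-2}\|v_\eta-v\|_{L^2(A)}^2\to 0$ as $\eta\to 0$ for any fixed $\delta$. For $J_2$ the factor $(1-\varphi_\delta(x))$ restricts the $x$-integration to the shell $S_\delta:=A\setminus A(2\delta)$; using $(v(y)-v(x))^2\le|y-x|^2\int_0^1|\nabla v(x+t(y-x))|^2\,dt$ first for smooth $v$ and then by density for $v\in H^1(A)$, followed by Fubini and the same second-moment bound on $b_\eta$, I obtain $\limsup_{\eta\to 0}J_2\le C\int_{S_\delta}|\nabla v|^2\,dx=:r(\delta)$, which vanishes as $\delta\to 0$ by absolute continuity of $|\nabla v|^2\in L^1(A)$.

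In the case of interest $L:=\limsup_{\eta\to 0}F^b_\eta(v_\eta)<\infty$, picking $\alpha=\alpha(\delta)\to 0$ as $\delta\to 0$ (for instance $\alpha(\delta)=\sqrt{r(\delta)}$) gives $\limsup_{\eta\to 0}(F^b_\eta(v^\delta_\eta)-F^b_\eta(v_\eta))\le\alpha L+(1+\alpha)r(\delta)\to 0$, as required. The main obstacle is the shell control of $J_2$: the long-range nature of the kernel forces the use of the full second-moment integrability provided by \eqref{ass-a}, and the passage from smooth test functions to a general $v\in H^1(A)$ must be done through a careful density argument; the cut-off-gradient term $J_3$ is then handled by the same moment estimate.
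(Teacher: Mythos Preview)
Your argument is correct and reaches the same conclusion, but the route is genuinely different from the paper's. The paper uses De Giorgi's averaging technique: it subdivides the annulus $A(\delta)\setminus A(2\delta)$ into $N$ concentric shells, selects by pigeonhole a good shell $S_k$ on which the transition occurs (so that the cut-off location actually depends on $\eta$), and then decomposes $A\times A$ into six regions $B_1,B_2,B_3,B_3',B_4,B_4'$ handled separately; the far-apart cross interactions ($B_4,B_4'$) are killed explicitly by the polynomial decay of $b_\eta$, while the transition layer contributes $C/N$ after the averaging choice of $k$. You instead fix a single $\eta$-independent cut-off and exploit the pointwise convexity $(\varphi p+(1-\varphi)q)^2\le\varphi p^2+(1-\varphi)q^2$ together with the Young splitting $(P+Q)^2\le(1+\alpha)P^2+(1+\alpha^{-1})Q^2$, reducing the error to two integrals $J_2,J_3$; long-range interactions are absorbed directly into the second-moment bound on $b_\eta$ rather than isolated as a separate region. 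Your approach is shorter and avoids the six-region bookkeeping; the paper's slicing, on the other hand, is the classical tool and would extend verbatim to non-quadratic integrands where your convexity inequality is unavailable.

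Two minor remarks. First, both proofs tacitly use $\limsup_\eta F^b_\eta(v_\eta)<\infty$: you state it explicitly, the paper hides it in the constant $C/N$; this is harmless since the proposition is only ever applied to sequences with equibounded energy. Second, your ``by density'' justification for the shell bound $\limsup_\eta J_2\le C\int_{S_\delta}|\nabla v|^2$ is a bit terse, because $A$ is merely Lipschitz and segments $[x,y]$ need not stay in $A$; it is cleaner to extend $v$ to $\tilde v\in H^1(\rr^d)$, use the fundamental theorem of calculus along lines (valid a.e.\ for Sobolev functions), then apply Fubini, the translation $z=x+t\eta\xi$, and dominated convergence in $\xi$.
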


\begin{proof} With fixed $N\in\NN$ and  $k\in\{1,\ldots, N\}$ (to be determined below) let
$$
\phi(x)= \min\Biggl\{\max\Bigl\{0,{N\over \delta }\Bigl({\rm dist}(x,\partial A)-\delta\Bigl(1+{k-1\over N}\Bigr)\Bigr)\Bigr\},1\Biggr\},
$$
so that $\phi=0$ in $A\setminus A(\delta(1+{k-1\over N}))$,  $\phi=1$ in $A(\delta(1+{k\over N}))$, and $|\nabla\phi|\le {N\over\delta}$.

We define
$$
v^\delta_\eta=\phi \,v_\eta +(1-\phi)\,v .
$$
Upon writing
\begin{eqnarray*}
v^\delta_\eta(y)-v^\delta_\eta(x)&=&
\phi(y)(v_\eta(y)-v_\eta(x))+(1-\phi(y))(v (y)-v (x))\\
 &&+(\phi(y)-\phi(x))(v_\eta(x)-v (x)),
\end{eqnarray*}
we can estimate $F^b_\eta (v^\delta_\eta)$ by examining separately the sets
\begin{eqnarray*}
B_1&=&A\Bigl(\delta\Bigl(1+{k\over N}\Bigr)\Bigr)\times A\Bigl(\delta\Bigl(1+{k\over N}\Bigr)\Bigr) ,
\\
B_2&=&\Bigl(A\setminus A\Bigl(\delta\Bigl(1+{k-1\over N}\Bigr)\Bigr)\Bigr)\times \Bigl(A\setminus A\Bigl(\delta\Bigl(1+{k-1\over N}\Bigr)\Bigr)\Bigr)
\\
B_3&=&\Bigl(A\Bigl(\delta\Bigl(1+{k-1\over N}\Bigr)\Bigr)\setminus A\Bigl(\delta\Bigl(1+{k\over N}\Bigr)\Bigr)\times A
\\
B'_3&=& \Bigl(A\times \Bigl(\Bigl(A\Bigl(\delta\Bigl(1+{k-1\over N}\Bigr)\Bigr)\setminus A\Bigl(\delta\Bigl(1+{k\over N}\Bigr)\Bigr)\Bigr)\Bigr)\setminus B_3
\\
B_4&=& A\Bigl(\delta\Bigl(1+{k\over N}\Bigr)\Bigr)\times \Bigl(A\setminus A\Bigl(\delta\Bigl(1+{k-1\over N}\Bigr)\Bigr)\Bigr)
\\
B'_4&=& \Bigl(A\setminus A\Bigl(\delta\Bigl(1+{k-1\over N}\Bigr)\Bigr)\Bigr)\times A\Bigl(\delta\Bigl(1+{k\over N}\Bigr)\Bigr).
\end{eqnarray*}

We have
\begin{eqnarray*}&& \hskip-2cm
{1\over\eta^{d}}
\int_{B_1} b_\eta\Bigl({x\over\eta}, {y\over\eta}\Bigr)
\Big({v^\delta_\eta(y)-v^\delta_\eta(x)\over \eta}\Big)^2 \dx\,dy\\
&=&{1\over\eta^{d}}\int_{B_1} b_\eta\Bigl({x\over\eta}, {y\over\eta}\Bigr)
\Bigl({v_\eta(y)-v_\eta(x)\over\eta}\Bigr)^2 \dx\,dy\le F^b_\eta (v_\eta);
\end{eqnarray*}
\begin{eqnarray*}&& \hskip-2cm
{1\over\eta^{d}}\int_{B_2} b_\eta\Bigl({x\over\eta}, {y\over\eta}\Bigr)
\Bigl({v^\delta_\eta(y)-v^\delta_\eta(x)\over\eta}\Bigr)^2\dx\,dy\\
&=&{1\over\eta^{d}}\int_{B_2} b_\eta\Bigl({x\over\eta}, {y\over\eta}\Bigr)
\Bigl({v(y)-v(x)\over\eta}\Bigr)^2\dx\,dy
\\
&\le&{1\over\eta^{d}}\int_{A(2\delta)\times A(2\delta)} b_\eta\Bigl({x\over\eta}, {y\over\eta}\Bigr)
\Bigl({v(y)-v(x)\over\eta}\Bigr)^2\dx\,dy
\\
&\le&
\omega(|A(2\delta)|),
\end{eqnarray*}
with $\omega(s)\to 0$ as $s\to 0$.

We set $$
S_k=A\Bigl(\delta\Bigl(1+{k-1\over N}\Bigr)\Bigr)\setminus A\Bigl(\delta\Bigl(1+{k\over N}\Bigr)\Bigr)$$ and note that
\begin{eqnarray*}
&& \hskip-2cm{1\over\eta^d}\int_{S_k\times A} b_\eta\Bigl({x\over\eta}, {y\over\eta}\Bigr)
\Bigl({\phi(y)-\phi(x)\over\eta}\Bigr)^2 (v_\eta-v)^2 \dx\,dy\\
&\le&  {N^2\over\delta^2}{1\over\eta^{d+2}}\int_{S_k\times A} b_\eta\Bigl({x\over\eta}, {y\over\eta}\Bigr)
|x-y|^2 (v_\eta-v)^2 \dx\,dy\\
&\le& C{N^2\over\delta^2}\int_{S_k} (v_\eta-v)^2\dx.
\end{eqnarray*}
We then have
\begin{eqnarray*}&& \hskip-2cm
{1\over\eta^d}\int_{B_3} b_\eta\Bigl({x\over\eta}, {y\over\eta}\Bigr)
\Bigl({v^\delta_\eta(y)-v^\delta_\eta(x)\over\eta}\Bigr)^2\dx\,dy\\
&\le& C{1\over\eta^d}\int_{S_k\times A} b_\eta\Bigl({x\over\eta}, {y\over\eta}\Bigr)
\Bigl({v_\eta(y)-v_\eta(x)\over\eta}\Bigr)^2\dx\,dy
\\
&&+ C{1\over\eta^d}\int_{S_k\times A} b_\eta\Bigl({x\over\eta}, {y\over\eta}\Bigr)
\Bigl({v(y)-v(x)\over\eta}\Bigr)^2\dx\,dy
\\
&&+ C{N^2\over\delta^2}\int_{S_k} (v_\eta-v)^2\dx.
\end{eqnarray*}
We may now choose $k\in\{1,\ldots, N\}$ such that
\begin{eqnarray*}
 &&{1\over\eta^d}\int_{S_k\times A} b_\eta\Bigl({x\over\eta}, {y\over\eta}\Bigr)
\Bigl({v_\eta(y)-v_\eta(x)\over\eta}\Bigr)^2\dx\,dy
+ {1\over\eta^d}\int_{S_k\times A} b_\eta\Bigl({x\over\eta}, {y\over\eta}\Bigr)
\Bigl({v(y)-v(x)\over\eta}\Bigr)^2\dx\,dy
\\
&&\qquad\qquad\qquad \le {1\over N} (F^b_\eta(v_\eta)+F^b_\eta(v)),
\end{eqnarray*}
so that
$$
{1\over\eta^d}\int_{B_3} b_\eta\Bigl({x\over\eta}, {y\over\eta}\Bigr)
\Bigl({v^\delta_\eta(y)-v^\delta_\eta(x)\over\eta}\Bigr)^2\dx\,dy
\le {C\over N}+C{N^2\over\delta^2}\int_{S_k} (v_\eta-v)^2\dx.
$$
The same argument proves the same estimate for the integral on $B'_3$.

As for $B_4$, note that for $(x,y)\in B_4$ we have
$$
|y-x|\ge {\delta\over N},
$$
so that, using the growth assumption \eqref{ass-a} on $b_\eta$, we obtain
\begin{eqnarray*}
{1\over\eta^d}\int_{B_4} b_\eta\Bigl({x\over\eta}, {y\over\eta}\Bigr)
\Bigl({v^\delta_\eta(y)-v^\delta_\eta(x)\over\eta}\Bigr)^2\dx\,dy
&\le& C {N^{2+d+\kappa}\over \delta^{2+d+\kappa}}\eta^k.
\end{eqnarray*}
The estimate for the contribution of $B'_4$ is completely analogous.

Gathering all estimates above we have
$$
F^b_\eta(v^\delta_\eta)- F^b_\eta(v_\eta)\le
\omega(|A(2\delta)|)+{C\over N}+C{N^2\over\delta^2}\int_{A} (v_\eta-v)^2\dx
+C {N^{2+d+\kappa}\over \delta^{2+d+\kappa}}\eta^k
$$
Letting $\eta\to 0$ and using the arbitrariness of $N$ we obtain the claim.
\end{proof}

\begin{remark}\label{bvp-rem}\rm We may generalize the proposition above by fixing as boundary data, instead of a fixed $v\in H^1(A)$, a sequence $w_\eta$ converging weakly to $v$ in $H^1(A)$ and
satisfying
$$
\limsup_{\eta\to 0}\int_{U}\int_{\{\xi: x+\eta\xi\in U\}} {1\over (1+|\xi|)^{d+2+\kappa}}
\Bigl({w_\eta(x+\eta\xi)-w_\eta(x)\over\eta}\Bigr)^2d\xi \dx\le \omega (|U|)
$$
for all open sets $U$, where $\omega(t)\to 0$ as $t\to 0$. In this case, by defining
$$
v^\delta_\eta=\phi \,v_\eta +(1-\phi)\,w_\eta .
$$
in the proof above, we can obtain that the $v^\delta_\eta$ converging to $v$ in $L^2(A)$ satisfy
$$
v^\delta_\eta= w_\eta \hbox{ in } A\setminus A(\delta), \qquad v^\delta_\eta= v_\eta \hbox{ in } A(2\delta).
$$
\end{remark}

\section{An extension theorem}
The following result states that any function $u$ defined on the perforated domain $\Omega\cap \e E$ can be extended to a function defined on the whole domain $\Omega$ such that, upon restricting to the interior of the domain (more precisely, to points at distance { greater than} some multiple of $\e$) a reference convolution energy of the extension is bounded by $F_\e(u)$ and the $L^2$-norm of the extension is bounded by the $L^2$-norm of $u$ on the perforated domain. This result mirrors the analog where $F_\e$ is the Dirichlet energy on the
perforated domain and the reference convolution energy is replaced with the Dirichlet energy on the solid domain \cite{ACDP}.

\begin{definition}\label{uLs} We say that an open subset $\cal F$ of $\rr^d$ has {\em  a uniformly Lipschitz countinuous boundary} if
there exist constants  $L>0$ and $\rho_1, \,\rho_2>0$    such that for any point $x\in\partial\mathcal{F} $  there exists a set $\mathbb S$ which,  up to translation by $x$ and rotation, is of the form $(-\rho_1,\rho_1)^{d-1}\times(-\rho_2,\rho_2)$ such that  $\mathbb S\cap \mathcal{F}$ is the sub-graph of a $L$-Lipschitz function defined on $(-\rho_1,\rho_1)^{d-1}$.
\end{definition}

Notice that we can assume without loss of generality that the $L$-Lipschitz function in the last definition takes
on values in the interval  $(-\frac{\rho_2}2,\frac{\rho_2}2)$.

\begin{theorem}[extension theorem]\label{t_ext}
Let $E$ be an open subset of $\rr^d$ with the following properties

  $1$. $\mathbb R^d\setminus E$ is a union of bounded open sets in $\mathbb R^d$;

 $2$. the diameters of these sets are uniformly bounded;

  $3$. the distance between any two distinct sets is bounded from below by a positive constant;

  $4$. the boundary of the set $E$ is uniformly Lipschitz continuous in the sense of Definition \ref{uLs}.

Let $\Omega$ be an open set with Lipschitz boundary. For each $r_0>0$ there exist $k>0$ and $r>0$ such that
%for each $\Omega'\subset\subset \Omega$ and
for all $u\in L^2(\Omega\cap \e E)$ there exists $v\in L^2(\Omega)$ such that
\begin{equation}\label{odin}
v=u\ \ \hbox{\rm on } \Omega\cap\e E,
\end{equation}
\begin{equation}\label{dva}
\int_{\Omega(k\e)\times\Omega(k\e)\cap\{|x-y|\le\e r\}}
\Bigl({v(y)-v(x)\over\e}\Bigr)^2dy \dx\le
C \int_{(\Omega\cap\e E)\times(\Omega\cap\e E)\cap\{|x-y|\le\e r_0\}}
\Bigl({u(y)-u(x)\over\e}\Bigr)^2dy \dx
\end{equation}
and
\begin{equation}\label{tri}
\int_{\Omega(k\e)}|v|^2\dx\le C\int_{\Omega\cap\e E}|u|^2\dx.
\end{equation}
\end{theorem}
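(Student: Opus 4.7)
The plan is to adapt the reflection scheme of \cite{ACDP}, originally devised for Dirichlet energies on perforated domains, to the non-local convolution energy at hand. By rescaling $x\mapsto x/\e$ I reduce to unit scale, where $\rr^d\setminus E=\bigsqcup_\alpha K_\alpha$ is a disjoint union of uniformly bounded, uniformly separated sets with uniformly Lipschitz boundaries.

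For each component $K_\alpha$ the central geometric ingredient is a bi-Lipschitz ``reflection'' $\Phi_\alpha:K_\alpha\to N_\alpha$, where $N_\alpha\subset E$ is a tubular neighborhood of $K_\alpha$. I require $\Phi_\alpha=\mathrm{id}$ on $\partial K_\alpha$, the bi-Lipschitz bounds $L^{-1}|x-y|\le|\Phi_\alpha(x)-\Phi_\alpha(y)|\le L|x-y|$, and the comparability $c_1\,\mathrm{dist}(x,\partial K_\alpha)\le|\Phi_\alpha(x)-x|\le c_2\,\mathrm{dist}(x,\partial K_\alpha)$, all with constants independent of $\alpha$. The map will be constructed by covering $\partial K_\alpha$ with the Lipschitz graph charts of Definition \ref{uLs}, reflecting across the graph in each chart by swapping the sign of the transverse coordinate, and gluing through a Lipschitz partition of unity; the uniform separation hypothesis (property 3) lets the $N_\alpha$ be taken pairwise disjoint. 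I then set $v(x)=u(x)$ for $x\in\Omega\cap\e E$ and $v(x)=u(\e\Phi_\alpha(x/\e))$ for $x\in\e K_\alpha$, choosing $k>0$ large enough that whenever $\e K_\alpha\cap\Omega(k\e)\ne\emptyset$ the corresponding $\e N_\alpha$ lies in $\Omega$.

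The convolution bound (\ref{dva}) is then obtained by splitting the double integral into three regimes, according to whether $x,y$ both lie in $\e E$, both in the same $\e K_\alpha$, or one in each. In the first regime $v=u$ and the contribution is already part of the right-hand side. In the second, the change of variables $(x,y)\mapsto(\e\Phi_\alpha(x/\e),\e\Phi_\alpha(y/\e))$ sends the pair into $(\e N_\alpha)^2\subset(\Omega\cap\e E)^2$ with $|x'-y'|\le L|x-y|\le L\e r$, so choosing $Lr\le r_0$ bounds this contribution. The third (mixed) regime is the genuinely new feature of the convolution setting: if $(x,y)\in\e K_\alpha\times\e E$ with $|x-y|\le\e r$, then $\mathrm{dist}(x,\partial(\e K_\alpha))\le\e r$, hence $|\e\Phi_\alpha(x/\e)-x|\le c_2\e r$, so $(\e\Phi_\alpha(x/\e),y)\in(\Omega\cap\e E)^2$ with separation at most $(c_2+1)\e r$; changing variables in $x$ alone (Jacobian uniformly bounded on the shell in question) bounds this contribution by the right-hand side provided $r_0\ge(c_2+1)r$. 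Summing over $\alpha$ and using the disjointness of the $\e N_\alpha$ gives a uniform $C$, and (\ref{tri}) follows from the same change of variables.

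The main obstacle will be the construction of $\Phi_\alpha$ with the two-sided bound on $|\Phi_\alpha(x)-x|$: since $\partial K_\alpha$ is merely Lipschitz, a naive normal reflection is unavailable, and the chart-plus-partition-of-unity construction has to be set up so that the transverse displacement is comparable to the distance to the boundary uniformly in $\alpha$. A secondary point, absent in the local Dirichlet setting and highlighted in the introduction, is the mixed-regime interaction above; controlling it relies on the shell observation that any such pair $(x,y)$ is automatically trapped in an $\e r$-tube around $\partial(\e K_\alpha)$ on both sides.
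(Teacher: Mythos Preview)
There is a genuine gap: the global reflection $\Phi_\alpha:K_\alpha\to N_\alpha\subset E$ that you postulate cannot exist. If $K_\alpha$ is, say, a closed ball, a continuous map $\Phi_\alpha:K_\alpha\to\rr^d\setminus\mathring K_\alpha$ with $\Phi_\alpha=\mathrm{id}$ on $\partial K_\alpha$ would, after composing with the radial projection onto $\partial K_\alpha$, yield a retraction of the ball onto its boundary sphere, which is impossible. Your own chart-and-partition-of-unity construction confirms this: the charts of Definition~\ref{uLs} only cover a tubular shell of width $\rho_2$ about $\partial K_\alpha$, so the glued map is defined there and nowhere else. Consequently your formula $v(x)=u(\e\Phi_\alpha(x/\e))$ leaves $v$ undefined on the deep interior of each hole, and in the ``both in $\e K_\alpha$'' regime you have pairs $(x,y)$ with $|x-y|\le\e r$ lying entirely in that deep interior, where the change-of-variables argument has no meaning.

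The paper's proof confronts exactly this point. It reflects only on the shell: with $A^\star=\{\xi\in\mathbf K:\mathrm{dist}(\xi,\partial\mathbf K)<\tau\}$ and $A$ the corresponding exterior shell, it uses a bi-Lipschitz $\mathcal R:A\to A^\star$ and then sets
\[
v=\varphi\,u\!\circ\!\mathcal R^{-1}+(1-\varphi)\,\bar u_A\ \hbox{ on }A^\star,\qquad v=\bar u_A\ \hbox{ on }\mathbf K\setminus A^\star,
\]
where $\bar u_A$ is the average of $u\circ\mathcal R^{-1}$ over $A^\star$ and $\varphi$ is a cutoff. The constant fill in the core makes $v$ globally defined; the price is that the reflection estimates now produce the unrestricted integral $\int_{A\times A}(u(\eta)-u(\xi))^2\,d\xi\,d\eta$ on the right-hand side, and a separate Poincar\'e-type step (Lemma~\ref{ene_locali}, a discrete-path argument on the connected Lipschitz set $A$) is needed to reduce it back to $\int_{(A\times A)\cap\{|\xi-\eta|\le r\}}(u(\eta)-u(\xi))^2\,d\xi\,d\eta$. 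Your treatment of the mixed regime (one point in the hole, one outside) via the displacement bound $|\Phi_\alpha(x)-x|\le c_2\,\mathrm{dist}(x,\partial K_\alpha)$ is sound on the shell and is morally the same manoeuvre the paper uses there; but to make the argument go through you must replace the global reflection by the shell-reflection-plus-constant scheme and supply an analogue of Lemma~\ref{ene_locali}.
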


Note that if \eqref{bou_a} holds then \eqref{dva} implies that
\begin{equation}\label{dva-energy}
\int_{\Omega(k\e)}\int_{\{|\xi|\le r\}}
\Bigl({v(x+\e\xi)-v(x)\over\e}\Bigr)^2d\xi \dx\le
C F_\e(u)
\end{equation}

Our arguments rely on the following statement.

\begin{lemma}\label{ene_locali} Let $A$ be a connected bounded Lipschitz set.
For any $r>0$ there exists a constant $c_r>0$ such that the following inequality holds
\begin{equation}\label{ineq_loc1}
\int_{A\times A}(u(\eta)-u(\xi))^2\,d\xi \,d\eta\leq c_r
\int_{(A\times A)\cap\{
(\xi,\eta): |\xi-\eta|\leq r\}}(u(\eta)-u(\xi))^2\,d\xi\, d\eta.
\end{equation}
If   $L$, $\rho_1,\rho_2$ are constants as in Definition \ref{uLs} then
the constant $c_r$ depends on $A$ only through its diameter and such constants.
\end{lemma}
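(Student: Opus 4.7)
The strategy is a chain-of-balls Poincar\'e-type argument. If $\mathrm{diam}(A)\le r$, then the constraint $|\xi-\eta|\le r$ is automatic and \eqref{ineq_loc1} holds trivially with $c_r=1$. In general, the plan is to bridge a distant pair $(\xi,\eta)\in A\times A$ by a chain of intermediate points, each consecutive pair of which sits inside some small ball (hence at distance $\le r$), and then recover the outer $(\xi,\eta)$ integration by averaging over those intermediate points.

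First I will cover $A$ by finitely many open balls $B_1,\dots,B_N$ of diameter $\le r/2$ with centers in $A$, where $N$ depends on $\mathrm{diam}(A)$ and $r$. The uniformly Lipschitz bound on $\partial A$ (Definition \ref{uLs}) guarantees $|B_i\cap A|\ge\mu$ for some $\mu=\mu(r,L,\rho_1,\rho_2)>0$, and by a slightly careful construction (for instance, using nested concentric balls of radii $r/4$ and $r/2$ when recording overlaps) I arrange the same lower bound for every non-empty pairwise intersection $B_i\cap B_j\cap A$. Form the graph $G$ with vertex set $\{1,\dots,N\}$ and an edge $(i,j)$ iff $B_i\cap B_j\cap A\neq\emptyset$. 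Then $G$ is connected: otherwise the unions $V_C=\bigcup_{i\in C}(B_i\cap A)$ taken over the graph components would split $A$ into disjoint non-empty open sets, contradicting the connectedness of $A$.

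Next, for every ordered pair of indices $(i,j)$ I fix a path $i=k_0,k_1,\dots,k_L=j$ in $G$ of length $L\le N$. For $\xi\in B_i\cap A$, $\eta\in B_j\cap A$ and auxiliary points $z_l\in B_{k_{l-1}}\cap B_{k_l}\cap A$, the telescoping identity
$$u(\eta)-u(\xi)=(u(z_1)-u(\xi))+\sum_{l=1}^{L-1}(u(z_{l+1})-u(z_l))+(u(\eta)-u(z_L))$$
combined with Cauchy--Schwarz produces a sum of $L+1$ squared increments. Each such increment involves two points lying in a common ball of diameter $\le r/2$, hence at distance $\le r$. I average the $z_l$'s uniformly over the sets $B_{k_{l-1}}\cap B_{k_l}\cap A$ (each of measure $\ge\mu$) and then integrate in $(\xi,\eta)$ over $(B_i\cap A)\times(B_j\cap A)$. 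Every resulting term becomes an integral of $(u(\eta')-u(\xi'))^2$ over a subset of $(A\times A)\cap\{|\xi'-\eta'|\le r\}$, scaled by a factor controlled by $|A|$ and $\mu^{-1}$. Summing over the at most $N^2$ pairs $(i,j)$ reconstructs $\int_{A\times A}(u(\eta)-u(\xi))^2$ on the left and yields \eqref{ineq_loc1} with $c_r$ depending only on $\mathrm{diam}(A)$, $L$, $\rho_1$, $\rho_2$ and $r$, as required.

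\emph{Main obstacle.} The delicate point is the first step: the pairwise overlaps $B_i\cap B_j\cap A$ must have quantitatively positive measure, not merely be non-empty. Near $\partial A$ the sets $B_i\cap A$ may degenerate to thin lenses whose mutual overlaps are hard to control; it is precisely to prevent this that the full uniform Lipschitz structure on $\partial A$ is used, yielding a quantitative form of connectedness without which the telescoping in the third step would not produce a uniform constant.
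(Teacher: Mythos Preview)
Your approach is correct and follows the same chain-of-balls telescoping idea as the paper. The one difference worth pointing out concerns precisely what you flag as the ``main obstacle''. You average the intermediate points $z_l$ over the overlaps $B_{k_{l-1}}\cap B_{k_l}\cap A$ of a fixed covering, and must therefore secure a uniform lower bound on the measure of these overlaps near $\partial A$ via the Lipschitz data. The paper sidesteps this entirely: for each pair of endpoints it builds a discrete path $\eta_0,\dots,\eta_{N+1}$ such that the \emph{intermediate} balls $B_{\nu r_1}(\eta_j)$, $1\le j\le N$, lie wholly inside $A$ (this is where the Lipschitz hypothesis enters), so the averaging weight is simply the full ball volume $(\nu r_1)^d$. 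Only the two endpoint balls may touch $\partial A$, and those appear on the left-hand side being estimated, not as averaging domains, so no lower bound on $|B\cap A|$ is ever needed. Both routes give the same dependence of $c_r$ on $\mathrm{diam}(A)$, $L$, $\rho_1$, $\rho_2$; the paper's choice just makes the bookkeeping lighter.
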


%{\color{blue}
%It sems we need to require that $A$ be connected. Otherwise, if $A=A'\cup A''$ with dist$(A', A'')>r$ and we take $u=\chi_{A'}$
%the integral on the left-hand side is $2|A'||A''|$ while the integral on the right-hand side is $0$.
%}

\begin{proof}[Proof of Lemma {\rm\ref{ene_locali}}]
Since for any function $u$ the integral on the right-hand side of  \eqref{ineq_loc1} is an increasing function
of  $r$, it is sufficient to prove \eqref{ineq_loc1} for  $r$ positive and small enough.

Since $A$ has a Lipschitz boundary and is connected, with fixed $r>0$ there exists $r_1\in (0, \frac12 {r})$
%{\color{red} (before there was $r_0$, but $r_0$ is defined in \eqref{bou_a} or fixed in the statement of the theorem)}
 and $\nu\in(0,1]$ that only depends on the Lipschitz constant of $A$ such that for any two points $\eta',\,\eta''\in A$
there is a discrete path from $\eta'$ to $\eta''$; i.e., a set of points $\eta'=\eta_0,\,\eta_1, \ldots, \eta_N,\,\eta''=\eta_{N+1}$,
 that possesses the following properties:
\begin{itemize}
  \item [(a)] $|\eta_{j+1}-\eta_j|\leq r_1$ for $j=0,\,1,\dots, N$;
  \item [(b)] for all $j=1,\,\ldots, N$ the ball $B_{\nu r_1}(\eta_j)=\{\eta\in\mathbb R^d\,:\,|\eta-\eta_j|\leq \nu r_1\}$
  is contained in $A$;
  \item [(c)]there exists  $\bar N=\bar N(r_1, \mathrm{diam}(A))$ such that $N\leq \bar N$ for all $\eta'$, $\eta''\in A$.
\end{itemize}
Indeed, since $A$ is a bounded Lipschitz set, it has a uniformly Lipschitz continuous boundary. Then there exists
a constant $r_2=r_2(L,\rho_1,\rho_2,r)>0$ such that $r_2<\frac12 r$, $r_2<\frac1{2d}\min(\rho_1,\rho_2)$,
and the set $A_{r_2}=\{x\in A\,:\,\mathrm{dist}(x,\partial A)>r_2\}$ is connected.  We  choose 
$r_1=\frac{r_2}{8(L+1)}$ and denote $Z_{A}=\{z\in \frac{r_1}{\sqrt{d}}\mathbb Z^d\,:\, z\in A_{r_2}\}$.
By construction  $B_{r_1}(x)\subset A$ for any $x\in A_{r_2}$, and for any $z_1$ and $z_2$ in $Z_A$ there exists a path $z_1=\eta_1,\ldots, \eta_N=z_2$ in $Z_A$ such that $|\eta_{j+1}-\eta_j|\leq r_1$ and
$N\leq \big(\frac {\mathrm{diam}(A)}{r_1}\big)^d$.  Also, by construction, for any $x\in A\setminus A_{r_2}$
there exists a path $x=\tilde\eta_0,\ldots,\tilde\eta_{\tilde N}$ such that $\tilde\eta_{\tilde N}\in Z_A$,
$|\tilde\eta_{j+1}-\tilde\eta_j|\leq r_1$, 
$\tilde N\leq 16(L+1)d$, and $B_{\frac{r_1}{2(L+1)}}(\tilde\eta_j)\subset A$ for all $j=1,\ldots, \tilde N$.
This implies the existence of a path that has properties $(a)$--$(c)$.

Writing
$$
u(\xi_0)-u(\xi_{N+1})=u(\xi_0)-u(\xi_1)+u(\xi_1)-\ldots -u(\xi_N)+u(\xi_N)-u(\xi_{N+1}),
$$
where $\xi_j$ denotes a point in $B_{\nu r_1}(\eta_j)$ for $1\le j\le N$, we get
\begin{eqnarray*}&&
\int_{(A\cap B_{\nu r_1}(\eta'))\times
(A\cap B_{\nu r_1}(\eta''))} (u(\xi_0)-u(\xi_{N+1}))^2\,d\xi_0 d\xi_{N+1}%\\[12mm] \displaystyle
\\
&=&  (\nu r_1)^{-dN} \int_{B_{\nu r_1}(\eta_1)}\dots\int_{B_{\nu r_1}(\eta_N)} \int_{(A\cap B_{\nu r_1}(\eta'))\times
(A\cap B_{\nu r_1}(\eta''))}\big\{u(\xi)-u(\xi_1)+u(\xi_1)-\ldots\\
&&\qquad\qquad\qquad\qquad\qquad\qquad\qquad\qquad -u(\xi_N)+u(\xi_N)-u(\eta)\big\}^2
\,d\xi_0\, d\xi_{N+1} \,d\xi_N\ldots d\xi_1\\
&\le& (N+1)(\nu r_1)^{-dN}\int_{A\cap B_{\nu r_1}(\eta_0)}\dots\int_{A\cap B_{\nu r_1}(\eta_{N+1})}\sum_{j=1}^{N+1}(u(\xi_j)-u(\xi_{j-1}))^2
d\xi_{N+1}\ldots d\xi_0\\&=&
 C(N+1)\sum_{j=1}^{N+1}\int_{(A\cap B_{\nu r_1}(\eta_{j}))\times (A\cap B_{\nu r_1}(\eta_{j-1}))}(u(\xi_j)-u(\xi_{j-1}))^2
d\xi_{j}\,d\xi_{j-1}\\
&\le&C
(\bar N+1)^2
\int_{(A\times A)\cap\{
(\xi,\eta): |\xi-\eta|\leq r\}}(u(\eta)-u(\xi))^2\,d\xi\, d\eta.
\end{eqnarray*}
%Then we have
%$$
%\int\limits_{(A\cap B_{r_1}(\eta'))\times (A\cap B_{r_1}(\eta''))}(u(\xi)-u(\eta))^2\,d\xi\, d\eta\leq c_2
%\int\limits_{ (A\times A)\cap
%\{(\xi,\eta): |\xi-\eta|\leq r_0\}}(u(\eta)-u(\xi))^2\,d\xi\, d\eta.
%$$
%Integrating this inequality for $(\eta',\eta'')\in A\times A$
 Covering $A$ with a finite number of balls of radius $\nu r_1$  and summing up the last inequality
 over all pairs of these balls gives  the desired estimate \eqref{ineq_loc1}.
\end{proof}

%We turn to the proof of Theorem \ref{t_ext}.

\def\K{{\bf K}}

\begin{proof}[Proof of Theorem {\rm\ref{t_ext}}]
We apply our arguments separately to each connected component of $\rr^d\setminus E$.
With fixed $\tau>0$ chosen below we consider a connected component $\K$ of $\rr^d\setminus E$, and set  $$A:=\{\xi\in \mathbb R^d\setminus \K\,:\, \mathrm{dist}(\xi,\partial \K)<\tau\}\hbox{ and }
A^\star:=\{\xi\in  \K\,:\, \mathrm{dist}(\xi,\partial \K)<\tau\}.$$

Since $\K$ is bounded and Lipschitz,  we may fix $\tau>0$ small enough and an invertible mapping $\mathcal{R}$ from  $A$
to $A^\star$ such that
$$
\frac12|\mathcal{R}(\xi')-\mathcal{R}(\xi'')|\leq |\xi'-\xi''|\leq 2|\mathcal{R}(\xi')-\mathcal{R}(\xi'')|
$$
for all $\xi',\,\xi''\in A$.
 Slightly abusing the notation we call this mapping  a {\sl reflection}.  In what follows for the sake of brevity we use the notation
$\xi_{\mathcal{R}}=\mathcal{R}^{-1}(\xi)$ for $\xi\in A^\star$.

 We set
$$
\bar u_A=\frac1{|A^\star|}\int_{A^\star}u(\mathcal{R}^{-1}(\xi))\,d\xi.
$$
Let $\varphi$ be a $C^\infty$ function such that $0\leq\varphi\leq 1$,   $\varphi=1$ in $A$ and in a neighbourhood
of $\partial \K$,  $\varphi=0$ in a neighbourhood of $\partial A^\star\setminus\partial \K$.

We define $v(\xi)$ as follows
$$
v(\xi)=\left\{\begin{array}{ll}
u(\xi) & \hbox{if }\xi\in A\\
\varphi(\xi)u(\mathcal{R}^{-1}(\xi))+(1-\varphi(\xi))\bar u_A& \hbox{if }\xi\in A^\star \\
\bar u_A& \hbox{if }\xi\in \K\setminus A^\star.
\end{array}
\right.
$$
Letting $k=\mathrm{diam}(Q_1)=\sqrt{d}$ and  $r=\min(r_0, k,\tau)$, we have
\begin{equation}\label{nstar_nstar}
\int_
{(A\times A)\cap\{|\xi-\eta|\leq r\}}
%{\begin{array}{c}\scriptstyle\eta\in A,\,\xi\in A\\[-1.1mm]\scriptstyle|\xi-\eta|\leq r_1\end{array}}
(v(\eta)-v(\xi))^2\,d\xi d\eta=
\int_
{(A\times A)\cap\{|\xi-\eta|\leq r\}}
%{\begin{array}{c}\scriptstyle\eta\in A,\,\xi\in A\\[-1.1mm]\scriptstyle|\xi-\eta|\leq r_1\end{array}}
(u(\eta)-u(\xi))^2\,d\xi d\eta
\end{equation}
and
\begin{eqnarray}\label{star_nstar}\nonumber
\int_
{(A\times A^\star)\cap\{|\xi-\eta|\leq r\}}
%{\begin{array}{c}\scriptstyle\eta\in A,\,\xi\in A^\star\\[-1.1mm]\scriptstyle|\xi-\eta|\leq r_1\end{array}}
\!\!\!\!\!\!\!\!(v(\eta)-v(\xi))^2\,d\xi d\eta&\leq&
\int_{A\times A}
%{\begin{array}{c}\scriptstyle\eta\in A,\\[-1.1mm]\scriptstyle\zeta\in A\end{array}}
(u(\eta)-u(\zeta))^2\left|\frac{\partial\mathcal{R}(\zeta)}{\partial\zeta}\right|\,d\zeta d\eta
\\
&\le&
C_{\mathcal{R}}
\int_{A\times A}
%{\begin{array}{c}\scriptstyle\eta\in A,\\[-1.1mm]\scriptstyle\zeta\in A\end{array}}
(u(\eta)-u(\zeta))^2\,d\zeta d\eta.
\end{eqnarray}
Here we have used the fact that the Jacobian $\Big|\frac{\partial\mathcal{R}(\zeta)}{\partial\zeta}\Big|$ is a bounded function:
$\Big|\frac{\partial\mathcal{R}(\zeta)}{\partial\zeta}\Big|\leq C_{\mathcal{R}}$.\\
Next, taking into account the relation $$v(\xi)-v(\eta)=(\varphi(\xi)-\varphi(\eta))(\bar u_A-u(\xi_{\mathcal{R}}))+\varphi(\eta)
(u(\xi_{\mathcal{R}})-u(\eta_{\mathcal{R}}))
\hbox{ if }\eta\in A^\star,\,\xi\in A^\star
$$ we obtain
\begin{eqnarray*}&&\hskip-2cm
\int\limits_
{(A^\star\times A^\star)\cap\{|\xi-\eta|\leq r\}}
%{\begin{array}{c}\scriptstyle\eta\in A^\star,\,\xi\in A^\star\\[-1.1mm]\scriptstyle|\xi-\eta|\leq r_1\end{array}}
(v(\eta)-v(\xi))^2\,d\xi d\eta\\
&\leq&
\int_{A^\star\times A^\star}
(\bar u_A-u(\xi_{\mathcal{R}}))^2\,d\xi d\eta+\int_{A^\star\times A^\star
}(u(\eta_{\mathcal{R}})-u(\xi_{\mathcal{R}}))^2\,d\xi d\eta.
\end{eqnarray*}
Since $\bar u_A$ is the average of the function $u(\xi_{\mathcal{R}})$ over $A^\star$, then
$$
\int_{A^\star\times A^\star
}(\bar u_A-u(\xi_{\mathcal{R}}))^2\,d\xi d\eta =\frac12\int_{A^\star\times A^\star
}(u(\eta_{\mathcal{R}})-u(\xi_{\mathcal{R}}))^2\,d\xi d\eta.
$$
This yields
\begin{equation}
\label{star_star}
\int_
{( A^\star\times A^\star)\cap\{|\xi-\eta|\leq r\}}
%{\begin{array}{c}\scriptstyle\eta\in A^\star,\,\xi\in A^\star\\[-1.1mm]\scriptstyle|\xi-\eta|\leq r_1\end{array}}
(v(\eta)-v(\xi))^2\,d\xi d\eta\leq C^2_{\mathcal{R}}\int_{
A\times A}(u(\eta)-u(\xi))^2\,d\xi d\eta.
\end{equation}
Finally,
\begin{eqnarray*}
\int_{A^\star}\int_{\{\xi\in \K\setminus A^\star:|\xi-\eta|\leq r\}}
%\int\limits_{\begin{array}{c}\scriptstyle\xi\in \K\setminus A^\star,\\[-1.1mm]\scriptstyle\eta\in A^\star,
%|\xi-\eta|\leq r_1\end{array}}
(v(\eta)-v(\xi))^2\,d\xi d\eta &\le&\int_{\K\setminus A^\star}\int_{A^\star}
(\varphi(\xi))^2(\bar u_A)-u(\xi_{\mathcal{R}}))^2\,d\xi\,d\eta
\\
&\leq& \big|\K\setminus A^\star\big|\int_{A^\star}
(\bar u_A)-u(\xi_{\mathcal{R}}))^2\,d\xi\\
&\le& C_{\mathcal{R}}\frac{\big|\K\setminus A^\star\big|}{|A^\star|}
\int_{A\times A}(u(\eta)-u(\xi))^2\,d\xi d\eta.
\end{eqnarray*}
Combining the last inequality with \eqref{nstar_nstar},  \eqref{star_nstar} and \eqref{star_star} we conclude that
\begin{equation}\label{ene_bou2}
  \int_{((\K\cup A)\times( \K\cup A))\cap
\{|\xi-\eta|\leq r\}}
(v(\eta)-v(\xi))^2\,d\xi d\eta \leq C \int_{A\times A}
(u(\eta)-u(\xi))^2\,d\xi d\eta
\end{equation}
We may now apply Lemma \ref{ene_locali}.
By  \eqref{ineq_loc1} we obtain

\begin{eqnarray*}%\label{ene_bou2}
  \int_{((\K\cup A)\times( \K\cup A))\cap
\{|\xi-\eta|\leq r\}}
(v(\eta)-v(\xi))^2\,d\xi d\eta &\le& C \int_{(A\times A)\cap
\{|\xi-\eta|\leq r\}}
(u(\eta)-u(\xi))^2\,d\xi d\eta.
%\\
%& \le& \color{red}C_1 \int_{A\times A}
%a(\xi-\eta)(u(\eta)-u(\xi))^2\,d\xi d\eta.
\end{eqnarray*}
%\end{equation}
After rescaling, this inequality reads
\begin{equation}\label{ene_bou3}
  \int_{(\varepsilon(\K\cup A)\times
  \varepsilon(\K\cup A))\cap\{|\xi-\eta|\leq \varepsilon r\}}
(v(\eta)-v(\xi))^2\,d\xi d\eta
\leq C_1 \int_{(\varepsilon A\times \varepsilon A)\cap\{|\xi-\eta|\leq \varepsilon r\}}
(u(\eta)-u(\xi))^2\,d\xi d\eta
% \leq C_1 {\color{red} \int_{\varepsilon A\times \varepsilon A}
%a\Big(\frac{\xi-\eta}\varepsilon\Big)(u(\eta)-u(\xi))^2\,d\xi d\eta}
\end{equation}
%We now choose $r=r_1$ and $k= \mathrm{diam}(A)$.
Summing up the last inequality over all the inclusions
in $\Omega(k\varepsilon)$, we obtain  \eqref{odin} and \eqref{dva}.  Inequality \eqref{tri} is a straightforward
consequence of the definition of $v$.  %This completes the proof of Theorem \ref{t_ext}.
\end{proof}

\section{Homogenization}\label{hom}
We now state and prove a homogenization result using $\Gamma$-convergence \cite{GCB,DM,Handbook}.
To this end we have first to specify the convergence with respect to which we compute the $\Gamma$-limit.
The choice is driven by the compactness Theorem \ref{t_comp} coupled with the extension result in the previous section.

\begin{definition}\label{de-con}
We say that a sequence $\{u_\e\}$ in $L^2(\Omega\cap \e E)$ converges to $u\in L^2(\Omega)$ if
there exists a sequence $\{\tilde u_\e\}$ in $L^2(\Omega)$ of extended functions with $\tilde u_\e= u_\e$
in $\Omega\cap \e E$ converging to $u\in L^2_{\rm loc}(\Omega)$.
\end{definition}

Note that the limit $u$ does not depend on the extensions $\{\tilde u_\e\}$ since it is also the weak $L^2_{\rm loc}(\Omega)$
limit of the sequence $|Q_1\cap E|^{-1}\chi_{\e E} u_\e$.

\begin{theorem}\label{thm_homogee}
Let $\Omega$ be an open set with Lipschitz boundary, and let $F_\e$ be given by \eqref{def-Fe-0}.
Then $F_\e$ $\Gamma$-converge with respect to the convergence $u_\e\to u$ 
 in Definition {\rm\ref{de-con}} to the functional
\begin{equation}\label{homfuneq}
F_{\rm hom}(u)=\int_\Omega \langle A_{\rm hom}\nabla u,\nabla u\rangle\dx
\end{equation}
with domain $H^1(\Omega)$, where $A_{\rm hom}$ is a symmetric matrix
given by the cell-problem formula
\begin{equation}\label{cpformula}
 \langle A_{\rm hom}z,z\rangle=\inf\Bigl\{\int_{(0,1)^d\cap E}\int_{E} a(x-y)(w(x)-w(y))^2 dx\,dy:
 w(y)-\langle z,y\rangle\hbox{ is $1$-periodic}\Bigr\}.
\end{equation}
\end{theorem}

From this theorem, thanks to the compactness results of the previous sections and the validity of Poincar\'e inequalities (see \cite[Corollary 4.2]{2019BP}) we deduce the convergence of minimum problems with forcing terms and/or boundary data.

The example below shows that if condition \eqref{bou_a} is not satisfied then the homogenization theorem may not hold.
\begin{example}\label{degex} \rm
%We consider an example of a sequence of functionals with $a$ not satisfying \eqref{bou_a}.
We fix $\delta<{1\over 4}$, consider the $1$-periodic connected set $E$ in $\rr^2$ such that
$$
E\cap Q_1= Q_1\setminus Q_{1-\delta},
$$
and set
$$
a=\chi_{\bigl((0,{1\over 2})+Q_\delta\bigr)}.
$$
Note that the set
$$
\{x\in\rr^2: \hbox{there exists }\xi \hbox{ such that } a(x+\xi)\neq 0\}= E\cap \bigl(E+\bigl(0,\hbox{${1\over 2}$}\bigr)+Q_\delta\bigr)$$
is contained in the collection of horizontal stripes
$\{x\in\rr^2: {\rm dist}(x_2,\ZZ)<2\delta\}$.
This implies that functions which are constant in each of those stripes have zero energy, and as a consequence the $\Gamma$-limit is zero on all $L^2$ functions $u=u(x_1,x_2)=u(x_2)$ independent of the first variable and in particular it cannot be represented as in
\eqref{homfuneq}. \end{example}

We subdivide the proof of Theorem \ref{thm_homogee} into a lower bound (Proposition \ref{lbound}) and an upper bound
(Proposition \ref{ubound}). The proof of the lower bound is itself split into several steps.
Moreover formula \eqref{cpformula} is obtained by remarking that the limit energy density is a quadratic form (Remark \ref{quaform}).

\begin{proposition}[lower bound]\label{lbound}
For all sequences $u_\e\to u$ we have $\liminf\limits_{\e\to 0} F_\e(u_\e)\ge F_{\rm hom}(u)$.
\end{proposition}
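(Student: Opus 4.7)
The plan is to follow the blow-up scheme of Fonseca and M\"uller. Fix a sequence $u_\e\to u$ with $\liminf F_\e(u_\e)<+\infty$ (otherwise there is nothing to prove), pass to a subsequence achieving the liminf, and let $\tilde u_\e\in L^2(\Omega)$ be the extensions given by Theorem~\ref{t_ext}. By Theorem~\ref{t_comp} applied to $\tilde u_\e$ (after a routine application of Poincar\'e--Wirtinger in conjunction with \eqref{dva-energy} to upgrade to an $L^2_{\rm loc}$ bound), we may assume $\tilde u_\e\to u$ in $L^2_{\rm loc}(\Omega)$ with $u\in H^1(\Omega)$. Next I would introduce the positive Radon measures on $\Omega$ given by
\[
\mu_\e(B)=\frac{1}{\e^{d+2}}\int_{(B\cap\e E)\times(B\cap\e E)}a\Bigl(\frac{y-x}{\e}\Bigr)(u_\e(y)-u_\e(x))^2dy\dx,
\]
which are equibounded; up to a further subsequence $\mu_\e\overset{*}{\rightharpoonup}\mu$ for some positive Radon measure $\mu$ on $\Omega$. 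Writing $\mu=f\,\mathcal{L}^d+\mu_s$ with $\mu_s\ge0$ singular, and using $\liminf F_\e(u_\e)\ge\mu(\Omega)\ge\int_\Omega f\dx$, it is enough to prove that
\[
f(x_0)\ge\langle A_{\rm hom}\nabla u(x_0),\nabla u(x_0)\rangle
\]
at every Lebesgue point $x_0\in\Omega$ of $\nabla u$ and~$f$.

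The core of the argument is the blow-up at such an $x_0$. For a.e.\ cube-radius $\rho$,
\[
f(x_0)=\lim_{\rho\to 0^+}\frac{\mu(Q_\rho(x_0))}{\rho^d}\ge\lim_{\rho\to 0^+}\frac{1}{\rho^d}\liminf_{\e\to 0}\mu_\e(Q_\rho(x_0)),
\]
so the task reduces to showing, for $\rho$ small and $z:=\nabla u(x_0)$, that
\[
\liminf_{\e\to 0}\frac{1}{\rho^d}\,F_\e(u_\e;Q_\rho(x_0))\ge\langle A_{\rm hom}z,z\rangle+o_\rho(1),
\]
where $F_\e(\,\cdot\,;U)$ denotes the functional restricted to the cube $U=Q_\rho(x_0)$. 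At the Lebesgue point the extension $\tilde u_\e$ is close, in $L^2(Q_\rho(x_0))/\rho^{d+2}$, to the affine function $\ell(y)=u(x_0)+\langle z,y-x_0\rangle$ up to a vanishing error as $\e\to 0$ and then $\rho\to 0$. I would then invoke Proposition~\ref{bvp} (and Remark~\ref{bvp-rem}, applied with $b_\e$ as in \eqref{be=a} and $w_\eta$ replaced by $\ell$) to replace $u_\e$ in a thin annular layer inside $Q_\rho(x_0)$ by $\ell$, with an energy cost that is $o_\rho(\rho^d)$; the decay hypothesis \eqref{ass-aa} on $a$ controls the long-range cross-terms introduced by this cut-off.

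Once the competitor agrees with $\ell$ near $\partial Q_\rho(x_0)$, I would tile $Q_\rho(x_0)$ by translated $\e$-copies of the periodicity cell $(0,1)^d$ (there are of the order of $(\rho/\e)^d$ such cells), extend the modified function by the affine $\ell$ outside $Q_\rho(x_0)$ (so the competitor is defined on all of $\e E$ and is, up to adding the affine term $\langle z,\cdot\rangle$, periodic outside the cube), and compare cell by cell with the cell-problem formula \eqref{cpformula}. Summing the lower bounds over the $(\rho/\e)^d$ cells, and using that the rescaled competitor on each cell is admissible in \eqref{cpformula}, yields
\[
F_\e(u_\e;Q_\rho(x_0))\ge\rho^d\langle A_{\rm hom}z,z\rangle-o_\rho(\rho^d)-o_\e(1),
\]
which is the desired inequality after dividing by $\rho^d$ and letting $\e\to 0$ and then $\rho\to 0$.

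I expect the main obstacle to be precisely this cell-by-cell comparison in the presence of long-range interactions. The cell-problem integrand in \eqref{cpformula} couples $(0,1)^d\cap E$ with \emph{all} of $E$, so each $\e$-cell interacts with arbitrarily distant cells, and a naive summation double-counts or misses interactions across $\partial Q_\rho(x_0)$. Here the convexity (quadraticity) of the energy is used: any admissible competitor on $Q_\rho(x_0)$ can be periodized (in the sense of extending by the affine $\ell$ outside) and then the quadratic long-range cross-terms are handled by a Jensen/convexity argument that exchanges the infimum with an averaging over translations, reducing the estimate on long-range interactions to the $1$-periodic infimum problem \eqref{cpformula}. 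The bookkeeping of the errors coming from the tail of $a$ (controlled by $\kappa>0$ in \eqref{ass-aa}) and from the boundary layer, ensuring that both are $o_\rho(\rho^d)+o_\e(1)$ uniformly, is the technical crux of the argument.
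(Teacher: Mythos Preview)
Your plan follows the paper's blow-up strategy, but two points need correction.

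First, your $\mu_\e$ is not a measure: for disjoint $B_1,B_2$ the cross-interactions over $(B_1\cap\e E)\times(B_2\cap\e E)$ give $\mu_\e(B_1\cup B_2)>\mu_\e(B_1)+\mu_\e(B_2)$, so you cannot invoke weak-$*$ compactness of Radon measures. The paper fixes this by letting only the first variable localize: it sets
\[
\mu_\e(A)=\int_{A\cap\e E}\int_{\{\xi:\,x+\e\xi\in\Omega\cap\e E\}}a(\xi)\Bigl(\frac{u_\e(x+\e\xi)-u_\e(x)}{\e}\Bigr)^2d\xi\,dx,
\]
which is a genuine measure (it has a density in $x$) with total mass $F_\e(u_\e)$.

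Second, the ``cell-by-cell'' comparison does not work as you first describe it: the restriction of the competitor to a single $\e$-cell is not $1$-periodic, hence not admissible in \eqref{cpformula}, so there is no per-cell lower bound to sum. The paper takes a different route after the cut-off step (your use of Proposition~\ref{bvp} matches the paper's Claim~1): it rescales the whole cube $Q_\rho$ to a single large cube $Q_T$ with $T=\rho/\e\to\infty$, proves that the asymptotic minimum
\[
f_{\rm hom}(z)=\lim_{T\to\infty}\frac{1}{T^d}\min\Bigl\{\int_{Q_T\cap E}\int_{\{y+\xi\in Q_T\cap E\}}a(\xi)(w(y+\xi)-w(y))^2\,d\xi\,dy:\ w=\langle z,\cdot\rangle \hbox{ near }\partial Q_T\Bigr\}
\]
exists (Proposition~\ref{def-fhom}, which absorbs all the long-range bookkeeping via Proposition~\ref{small-ext}), and only then reduces to the $1$-periodic problem by the convexity/averaging device you mention at the end: given an $N$-periodic $w$, the $1$-periodic function $\overline w(y)=N^{-d}\sum_{k\in\{1,\dots,N\}^d}w(y+k)$ has no larger energy by Jensen (Proposition~\ref{boundper}). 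So your closing paragraph has the right mechanism, but it \emph{replaces} the per-cell summation rather than rescuing it; the intermediate passage through $f_{\rm hom}$ on one large cube is what makes the long-range error terms manageable.
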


\begin{proof}
We fix a sequence $u_\e\to u$ with bounded $F_\e(u_\e)$. Upon using the extension Theorem \ref{t_ext} and the compactness Theorem \ref{t_comp} we may suppose that
$u_\e\to u$ in $L^2_{\rm loc}(\Omega)$ and that $u\in H^1(\Omega)$.

In order to prove the lower bound we use a variation of the Fonseca-M\"uller blow-up technique \cite{FM}. In the proof below we describe the general outline of the method (see also \cite{BMS} for more details on the application of the blow-up technique to homogenization). The proof of the claims which are particular of our problem are postponed to separate propositions.

We define the measures
$$
\mu_\e(A)=\int_{A\cap \e E}\int_{\{\xi: x+\e\xi\in \Omega\cap \e E\}}
a(\xi)\Bigl({u_\e(x+\e\xi)-u_\e(x)\over \e}\Bigr)^2d\xi\dx.
$$
Since $\mu_\e(A)=F_\e(u_\e)$, these measures are equibounded, and we may suppose that they converge weakly$^*$ to some measure $\mu$. We now fix an arbitrary Lebesgue point $x_0$ for $u$ and $\nabla u$, and set $z=\nabla u(x_0)$. The lower-bound inequality is proved if we show that
\begin{equation}
{d\mu\over dx}(x_0)\ge \langle A_{\rm hom}z,z\rangle.
\end{equation}

Upon a translation argument it is not restrictive to suppose that $x_0=0$, that
$0$ is a Lebesgue point of all $u_\e$ (upon passing to a subsequence), and that $u_\e(0)=u(0)=0$.
We note that for almost all $\rho>0$ we have $\mu_\e(Q_\rho)\to \mu(Q_\rho)$.
Since
$$
{d\mu\over dx}(0)= \lim_{\rho\to 0^+}{\mu(Q_\rho)\over\rho^d},
$$
we may choose (upon passing to a subsequence) $\rho=\rho_\e>\!>\e$ such that
$$
{d\mu\over dx}(0)= \lim_{\e\to 0^+}{\mu_\e(Q_\rho)\over\rho^d}.
$$
We trivially have
\begin{eqnarray*}
\mu_\e(Q_\rho)&\ge&\int_{Q_\rho\cap \e E}\int_{\{\xi: x+\e\xi\in Q_\rho\cap \e E\}}
a(\xi)\Bigl({u_\e(x+\e\xi)-u_\e(x)\over \e}\Bigr)^2d\xi\dx\\
&\ge&\int_{Q_{\e\lfloor\rho/\e\rfloor}\cap \e E}\int_{\{\xi: x+\e\xi\in Q_{\e\lfloor\rho/\e\rfloor}\cap \e E\}}
a(\xi)\Bigl({u_\e(x+\e\xi)-u_\e(x)\over \e}\Bigr)^2d\xi\dx.
\end{eqnarray*}
In order to ease the notation we will directly suppose that
\begin{equation}{\rho\over\e}\in \NN, \end{equation}
so that our claim is proven if we show that
\begin{equation}\label{main_claim}
\lim_{\e\to0^+}
{1\over \rho^d}\int_{Q_\rho\cap \e E}\int_{\{\xi: x+\e\xi\in Q_\rho\cap \e E\}}
a(\xi)\Bigl({u_\e(x+\e\xi)-u_\e(x)\over \e}\Bigr)^2d\xi\dx
\ge \langle A_{\rm hom}z,z\rangle.
\end{equation}

We now change variables and set
$$
v_\e(y)= {u_\e(\rho y)\over \rho} \hbox{ for } y\in Q_1
$$
Note that, since ${u(\rho y)\over \rho}$ converges to $\langle z,y\rangle$ as $\rho\to 0$ as we have assumed that $u(0)=0$,
and we also have assumed that  $u_\e(0)=0$, we may choose $\rho=\rho_\e$ above so that
$$
v_\e \to \langle z,y\rangle\hbox{ in } L^2(Q_1).
$$

\noindent
{\bf Claim 1.} For all $\delta>0$ there exists a sequence $v^\delta_\e$ such that $v^\delta_\e(y)=\langle z,y\rangle$ on $Q_1\setminus Q_{1-\delta}$ and
\begin{eqnarray*}
&&{1\over\rho^d}\int_{Q_\rho\cap \e E}\int_{\{\xi: x+\e\xi\in Q_\rho\cap \e E\}}
a(\xi)\Bigl({u_\e(x+\e\xi)-u_\e(x)\over \e}\Bigr)^2d\xi\dx
\\
&\ge&
\int_{Q_1\cap {\e\over\rho} E}\int_{\{\xi: y+{\e\over\rho}\xi\in Q_1\cap {\e\over\rho} E\}}
a(\xi)\Bigl({v^\delta_\e(y+{\e\over\rho}\xi)-v^\delta_\e(y)\over \e/\rho}\Bigr)^2d\xi\,dy + o(1)
\end{eqnarray*}
as $\delta\to 0$ uniformly in $\e$. The proof of this claim is obtained by Proposition \ref{bvp}, applied with
$b_\e$ as in \eqref{be=a}, $A=Q_1$ and
$\eta=\e/\rho$.

Thanks to Claim 1, \eqref{main_claim} is proved if we show that
\begin{eqnarray}\nonumber
&&\liminf_{\e\to 0}\min\Bigl\{\int_{Q_1\cap {\e\over\rho} E}\int_{\{\xi: y+{\e\over\rho}\xi\in Q_1\cap {\e\over\rho} E\}}
a(\xi)\Bigl({v(y+{\e\over\rho}\xi)-v(y)\over \e/\rho}\Bigr)^2d\xi\,dy: v(y)=\langle z,y\rangle\hbox{ on }Q_1\setminus Q_{1-\delta}\Bigr\}
\\
&\ge&\langle A_{\rm hom}z,z\rangle
\end{eqnarray}

We actually have that the liminf above may be turned into a limit and equality holds. This is stated in the two following claims, in which we use the change of variables $T={\rho\over\e}$. Recall that $T\in\NN$.

\medskip\noindent
{\bf Claim 2.} There exists the limit
\begin{eqnarray}f_{\rm hom}(z)&=&\nonumber
\lim_{T\in \NN, \ T\to+\infty}{1\over T^d}\min\Bigl\{\int_{Q_T\cap E}\int_{\{\xi: y+\xi\in Q_T\cap E\}}
a(\xi)(w(y+\xi)-w(y))^2d\xi\,dy: \\
&& \hskip2cm w(y)=\langle z,y\rangle\hbox{ on }Q_T\setminus Q_{(1-\delta)T} \hbox{ for some }\delta>0\Bigr\}\,.
\end{eqnarray}
The proof of this asymptotic homogenization formula is given in Proposition \ref{def-fhom}.

\medskip\noindent
{\bf Claim 3.}
We have $f_{\rm hom}(z)\ge f_0(z)$, where $f_0(z)$ is given by \eqref{cpformula}.

\noindent
The proof of this claim is given by Proposition \ref{boundper}.
\end{proof}

\begin{proposition}\label{small-ext}
Let $N\in\NN$ and let a function $w$ be such that
$(w-\langle z,y\rangle)$ is $N$-periodic in each coordinate direction  and
$$
w(y)=\langle z,y\rangle\hbox{ on }Q_N\setminus Q_{(1-\delta)N} \hbox{ for some }\delta>0.
$$
Then
\begin{eqnarray}\label{cubeN-est}\nonumber
&&\int_{Q_N\cap E}\int_{\{\xi: y+\xi\in E\setminus Q_N\}}
a(\xi)(w(y+\xi)-w(y))^2d\xi\,dy\\ \nonumber
&\le& C{1\over \delta^{2+d+\kappa} N^\kappa}
\int_{Q_N\cap E}\int_{\{\xi: y+\xi\in Q_N\cap E\}}
a(\xi)(w(y+\xi)-w(y))^2d\xi\,dy \\
&&\qquad+ CN^{d-1}\delta|z|^2+ C{N^{d-\kappa}\over \delta^{2+d+\kappa}},
\end{eqnarray}
with $C$ independent of $N$ for $N$ large.
\end{proposition}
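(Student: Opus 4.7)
The plan is to exploit the $N$-periodicity of $w - \langle z, y\rangle$ to ``fold back'' exterior interactions into $Q_N$. Writing $w(y) = \langle z, y\rangle + \tilde w(y)$ with $\tilde w$ $N$-periodic and vanishing on the boundary layer $Q_N \setminus Q_{(1-\delta)N}$, I would decompose
$$(w(y+\xi) - w(y))^2 \le 2(\tilde w(y+\xi) - \tilde w(y))^2 + 2\langle z, \xi\rangle^2,$$
so that the left-hand side of \eqref{cubeN-est} splits into a ``linear'' contribution involving $\langle z,\xi\rangle^2 a(\xi)$ and a ``periodic'' contribution involving $\tilde w$, to be estimated separately.

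For the linear contribution, I would split the $y$-integration between the interior $Q_{(1-\delta)N}$ and the boundary layer. On the interior, $y+\xi\notin Q_N$ forces $|\xi|\ge \delta N/2$, and the estimate $a(\xi)|\xi|^2 \le C(1+|\xi|)^{-d-\kappa}$ from \eqref{ass-aa} gives $\int_{|\xi|\ge \delta N/2} a(\xi)|\xi|^2\,d\xi \le C(\delta N)^{-\kappa}$, which absorbs into the third term of \eqref{cubeN-est}. On the boundary layer, a slice argument parametrised by $t=\mathrm{dist}(y,\partial Q_N)\in[0,\delta N/2]$, combined with the finiteness of $\int a(\xi)|\xi|^2\,d\xi$, produces the $CN^{d-1}\delta|z|^2$ contribution.

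For the periodic contribution, given $y\in Q_N\cap E$ with $y+\xi\in E\setminus Q_N$ I write uniquely $y+\xi = y' + N\ell$ with $y'\in Q_N$ and $\ell\in\ZZ^d\setminus\{0\}$. Periodicity gives $\tilde w(y+\xi) = \tilde w(y')$, so this contribution equals
$$2\sum_{\ell\neq 0}\int\int_{(Q_N\cap E)^2} a(y'-y+N\ell)(\tilde w(y')-\tilde w(y))^2\,dy\,dy'.$$
Since $\tilde w\equiv 0$ on the boundary layer, the dominant part comes from $y,y'\in Q_{(1-\delta)N}$ (the mixed cases, where one of $y,y'$ lies in the boundary layer, are strictly smaller and absorbed). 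A direct geometric check shows that for $y,y'\in Q_{(1-\delta)N}$ and any $\ell\neq 0$ one has $|y'-y+N\ell|\ge \delta N$, and then the decay of $a$ together with the convergence of $\sum_\ell |\ell|^{-d-2-\kappa}$ yields $\sum_{\ell\neq 0} a(y'-y+N\ell)\le C/(\delta N)^{d+2+\kappa}$. Applying Lemma~\ref{ene_locali} to $A = Q_N\cap E$ (connected and uniformly Lipschitz for $N$ large, with a Poincar\'e-type constant of order $N^{d+2}$) and using $(\tilde w(y')-\tilde w(y))^2 \le 2(w(y')-w(y))^2 + 2|z|^2|y-y'|^2$ together with \eqref{bou_a} to bound short-range differences by the internal integral on the right-hand side of \eqref{cubeN-est}, the periodic contribution is at most $C/(\delta^{d+2+\kappa}N^\kappa)$ times the internal integral, plus $C|z|^2 N^{d-\kappa}/\delta^{d+2+\kappa}$.

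The principal obstacle is the Poincar\'e step on the growing perforated cube $Q_N\cap E$: the path-based proof of Lemma~\ref{ene_locali} must be revisited to ensure that its constant scales at most as $N^{d+2}$ uniformly in $N$ (by tracking the diameter dependence $\bar N\sim N$ and the number of covering balls). Once that is in hand, the geometric inequality $|y'-y+N\ell|\ge \delta N$ for $y,y'\in Q_{(1-\delta)N}$ and $\ell\neq 0$ is the key input producing the $\delta^{-(d+2+\kappa)}$ factor which, combined with the $N^{-\kappa}$ coming from the decay of $a$ at scale $\delta N$, yields the prefactor of the internal integral in \eqref{cubeN-est}.
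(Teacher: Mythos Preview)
Your approach is correct in structure but diverges from the paper precisely at the step you flag as the ``principal obstacle.'' The paper does not attempt to sharpen Lemma~\ref{ene_locali} on the perforated set $Q_N\cap E$; instead it opens by invoking the Extension Theorem~\ref{t_ext} to replace $w$ by an extension defined on the whole cube $Q_N$ (satisfying \eqref{ext-est}), and then runs an explicit telescoping argument along straight segments in the \emph{convex} cube (the chain of inequalities in \eqref{est-inter}), which yields the $N^{d+2}$ scaling directly. Your proposed alternative --- tracking the dependence of the constant in Lemma~\ref{ene_locali} on $\bar N$ and on the number of covering balls --- does not by itself give $N^{d+2}$: the proof of that lemma bounds each of the $\sim N^{2d}$ pairs of covering balls by $C\bar N^2\sim CN^2$ times the \emph{full} short-range integral, so naive tracking only yields $N^{2d+2}$, off by a factor $N^d$, which spoils the coefficient of the internal integral in \eqref{cubeN-est}. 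To recover $N^{d+2}$ you must either extend to $Q_N$ and telescope there (the paper's route) or produce a genuinely sharper counting argument for paths in the perforated set; mere bookkeeping does not suffice.

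Apart from this point, your decomposition $w=\langle z,\cdot\rangle+\tilde w$ and uniform treatment of all $\ell\neq 0$ via the single geometric bound $|y'-y+N\ell|\ge\delta N$ is a clean alternative to the paper's separate handling of neighbouring versus distant cubes. Your dismissal of the mixed boundary-layer cases is justified: the inequality $|y'-y+N\ell|\ge\delta N/2$ persists whenever at least one of $y,y'$ lies in $Q_{(1-\delta)N}$, and when both lie in the boundary layer the integrand vanishes since $\tilde w=0$ there.
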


\begin{proof}
Using the extension Theorem \ref{t_ext} we may suppose that $w$ is defined in the whole $\rr^d$ and it satisfies
\begin{eqnarray}\label{ext-est}\nonumber
&&\int_{Q_N}\int_{\{\xi: \xi\in Q_1, \ y+\xi\in Q_N\}}
(w(y+\xi)-w(y))^2d\xi\,dy\\
&\le& C \int_{Q_N\cap E}\int_{\{\xi: y+\xi\in E\cap Q_N\}}
a(\xi)(w(y+\xi)-w(y))^2d\xi\,dy,
\end{eqnarray}
with $C$ independent of $N$ for $N$ large. Note that we may suppose that the extension estimate holds with $\xi\in Q_1$ upon a change of variables, and $N\delta>1$.

We have to estimate
\begin{eqnarray*}
\sum_{j\in \ZZ^d\setminus\{0\}}
\int_{Q_N\cap E}\int_{\{\xi: y+\xi\in E\cap(Nj+ Q_N)\}}
a(\xi)(w(y+\xi)-w(y))^2d\xi\,dy.
\end{eqnarray*}

We subdivide the estimate into several computations.
We first consider the indices for the $3^d-1$ cubes neighbouring $Q_N$; i.e., $j$ with $\|j\|_\infty=1$.
For such indices, we separately estimate the energy of the interactions with the `interior' of the cube $Q_N$.
Note that for such interactions $|\xi|\ge\delta N$, Using the decay assumption \eqref{ass-a} and the periodicity of $w(y)-\langle z,y\rangle$, we get
\begin{eqnarray*}
&&\int_{Q_{(1-\delta)}N\cap E}\int_{\{\xi: y+\xi\in E\cap(Nj+ Q_N)\}}
a(\xi)(w(y+\xi)-w(y))^2d\xi\,dy\\
&=&\int_{Q_{(1-\delta)}N\cap E}\int_{\{\xi: |\xi|\ge\delta N, \, y+\xi\in E\cap(Nj+ Q_N)\}}
a(\xi)(w(y+\xi)-w(y))^2d\xi\,dy\\
&\le&{C\over (\delta N)^{2+d+\kappa}}
\int_{Q_{(1-\delta)N}\cap E}\int_{\{\xi: y+\xi-jN\in E\cap Q_N\}}
(w(y+\xi-jN)-w(y)+N\langle z,j\rangle)^2d\xi\,dy\\
\\
&\le&{C\over (\delta N)^{2+d+\kappa}}
\int_{Q_{N}\cap E}\int_{\{\xi: y+\eta\in E\cap Q_N\}}
\Bigl((w(y+\eta)-w(y))^2+N^2|z|^2\Bigr)d\eta\,dy
\end{eqnarray*}
We now use the estimate
\begin{eqnarray}\label{est-inter}
&& \nonumber
\int_{Q_{N}\cap E}\int_{\{\eta: y+\eta\in E\cap Q_N\}}
(w(y+\eta)-w(y))^2d\eta\,dy
\\ \nonumber
&\le&\int_{Q_{N}}\int_{\{\eta\in Q_N: y+\eta\in Q_N\}}
(w(y+\eta)-w(y))^2d\eta\,dy
\\ \nonumber
&=&\int_{Q_{N}}\int_{\{\eta\in Q_N: y+\eta\in Q_N\}}
\Biggl(\sum_{k=1}^N \Bigl(w\Bigl(y+\eta{k\over N}\Bigr)-w\Bigl(y+\eta{k-1\over N}\Bigr)\Bigr)\Biggr)^2d\eta\,dy
\\ \nonumber
&\le&\int_{Q_{N}}\int_{\{\eta\in Q_N: y+\eta\in Q_N\}} N
\sum_{k=1}^N \Bigl(w\Bigl(y+\eta{k\over N}\Bigr)-w\Bigl(y+\eta{k-1\over N}\Bigr)\Bigr)^2d\eta\,dy
\\ \nonumber
&\le&N^{2+d}\int_{Q_{N}}\int_{Q_1} (w(y+\xi)-w(y))^2d\xi\,dy
\\
&\le&N^{2+d}C \int_{Q_N\cap E}\int_{\{\xi: y+\xi\in E\cap Q_N\}}
a(\xi)(w(y+\xi)-w(y))^2d\xi\,dy
\end{eqnarray}
by \eqref{ext-est}. Plugging this estimate in the previous one we then have
\begin{eqnarray}\label{est-1}\nonumber
&&\int_{Q_{(1-\delta)N}\cap E}\int_{\{\xi: y+\xi\in E\cap(Nj+ Q_N)\}}
a(\xi)(w(y+\xi)-w(y))^2d\xi\,dy\\
&\le&{C\over \delta^{2+d+\kappa} N^\kappa} \int_{Q_N\cap E}\int_{\{\xi: y+\xi\in E\cap Q_N\}}
a(\xi)(w(y+\xi)-w(y))^2d\xi\,dy +C{N^{d-\kappa}\over \delta^{2+d+\kappa}}.
\end{eqnarray}
A completely analogous argument shows that
\begin{eqnarray}\label{est-2}\nonumber
&&\int_{Q_{N}\cap E}\int_{\{\xi: y+\xi\in (E\cap(Nj+ Q_N))\setminus Q_{(1+\delta)N}\}}
a(\xi)(w(y+\xi)-w(y))^2d\xi\,dy\\
&\le&{C\over \delta^{2+d+\kappa} N^\kappa} \int_{Q_N\cap E}\int_{\{\xi: y+\xi\in E\cap Q_N\}}
a(\xi)(w(y+\xi)-w(y))^2d\xi\,dy+  C{N^{d-\kappa}\over \delta^{2+d+\kappa}}.
\end{eqnarray}
To complete the estimate on the cubes neighbouring $Q_N$ it remains to estimate the interactions
between pairs of points `close to the boundary' of $Q_N$, where $w(y)=\langle z,y\rangle$. For such interactions we have
\begin{eqnarray}\label{est-3}\nonumber
&&\hskip-1cm\int_{(Q_N\setminus Q_{(1-\delta)N})\cap E}\int_{\{\xi: y+\xi\in E\cap(Q_{(1+\delta)N}\setminus Q_N)\}}
a(\xi)(w(y+\xi)-w(y))^2d\xi\,dy\\ \nonumber
&\le&\int_{(Q_N\setminus Q_{(1-\delta)N})\cap E}\int_{\{\xi: y+\xi\in E\cap(Q_{(1+\delta)N}\setminus Q_N)\}}
a(\xi)|z|^2|\xi|^2d\xi\,dy\ \\
&\le&C N^{d-1}\delta|z|^2.
\end{eqnarray}

It remains to estimate the interactions between points in $Q_N$ and in `non-neighbouring' cubes $jN+Q_N$ with $\|j\|_\infty\ge2$.
Using the decay estimate \eqref{ass-a} on $a$ we obtain
\begin{eqnarray}\nonumber
&&\sum_{j\in\ZZ^d\  \|j\|_\infty\ge1}\int_{Q_N\cap E}\int_{\{\xi: y+\xi\in E\cap(jN+ Q_N)\}}
a(\xi)(w(y+\xi)-w(y))^2d\xi\,dy\\
&\le&{1\over N^{2+d+\kappa}}\sum_{j\in\ZZ^d, \|j\|_\infty\ge2}{1\over |j|^{2+d+\kappa}}
\int_{Q_N\cap E}\int_{\{\xi: y+\xi\in E\cap(jN+ Q_N)\}}
(w(y+\xi)-w(y))^2d\xi\,dy.
\end{eqnarray}
Proceeding as in \eqref{est-inter} we then get
\begin{eqnarray}\label{est-4}\nonumber
&&\sum_{j\in\ZZ^d\  \|j\|_\infty\ge1}\int_{Q_N\cap E}\int_{\{\xi: y+\xi\in E\cap(jN+ Q_N)\}}
a(\xi)(w(y+\xi)-w(y))^2d\xi\,dy\\
&\le&C{1\over N^{\kappa}}\sum_{j\in\ZZ^d\setminus\{0\}}{1\over |j|^{2+d+\kappa}}
\int_{Q_N\cap E}\int_{\{\xi: y+\xi\in E\cap Q_N\}}a(\xi)
(w(y+\xi)-w(y))^2d\xi\,dy.
\end{eqnarray}

Taking into account estimates \eqref{est-1}--\eqref{est-4}
we finally get
\begin{eqnarray}\nonumber
&&\int_{Q_N\cap E}\int_{\{\xi: y+\xi\in E\setminus Q_N\}}
a(\xi)(w(y+\xi)-w(y))^2d\xi\,dy\\ \nonumber
&\le&C\Bigl({1\over \delta^{2+d+\kappa} N^\kappa}
+{1\over N^{\kappa}}\Bigr)
\int_{Q_N\cap E}\int_{\{\xi: y+\xi\in E\cap Q_N\}}
a(\xi)(w(y+\xi)-w(y))^2d\xi\,dy\\
&&+C N^{d-1}\delta|z|^2+ C{N^{d-\kappa}\over \delta^{2+d+\kappa}}.
\end{eqnarray}
which yields \eqref{cubeN-est}.
\end{proof}

\begin{proposition}[asymptotic homogenization formula]\label{def-fhom}
There exists the limit
\begin{eqnarray}\label{fhom-form}
&&f_{\rm hom}(z)=\nonumber
\lim_{N\in\NN,\ N\to+\infty}{1\over N^d}\min\Bigl\{\int_{Q_N\cap E}\int_{\{\xi: y+\xi\in Q_N\cap E\}}
a(\xi)(w(y+\xi)-w(y))^2d\xi\,dy: \\
&& \hskip3cm w(y)=\langle z,y\rangle\hbox{ on }Q_N\setminus Q_{(1-\delta)N} \hbox{ for some }\delta>0\Bigr\}\,.
\end{eqnarray}
\end{proposition}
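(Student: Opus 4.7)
I plan to prove that $g(N) := m_N(z)/N^d$ (with $m_N(z)$ the minimum in \eqref{fhom-form}) converges by establishing almost-subadditivity in the Fekete style: for every large $L$,
\begin{equation*}
\limsup_{M\to\infty} g(M) \le g(L)\bigl(1 + \mathrm{err}_1(L)\bigr) + \mathrm{err}_2(L),
\end{equation*}
with $\mathrm{err}_i(L) \to 0$ as $L\to\infty$; taking $\liminf_L$ then yields $\limsup_M g(M) \le \liminf_L g(L)$ and the limit exists. Admissibility of the linear profile $\langle z, y\rangle$ in the cell problem gives $0 \le g(N) \le C|z|^2$, so both $\limsup$ and $\liminf$ are finite.

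To build the competitor for $m_M(z)$, fix $\delta_0 \in (0,1)$ and $\eta > 0$, and pick a near-minimiser $w_L$ of $m_L(z)$ satisfying $w_L(y) = \langle z,y\rangle$ on $Q_L\setminus Q_{(1-\delta_0)L}$ with energy at most $m_L(z)+\eta$. Extend $\widetilde w_L := w_L - \langle z,\cdot\rangle$ to $\rr^d$ by $L$-periodicity and set $\widehat w_L := \widetilde w_L + \langle z,\cdot\rangle$; this agrees with $\langle z,y\rangle$ on a neighborhood of every translated face $\partial Q_L + Lj$. For $M\ge L$, let $k = \lfloor M/L\rfloor$ and define
\begin{equation*}
w_M(y) = \widehat w_L(y)\ \text{for}\ y\in Q_{kL}, \qquad w_M(y) = \langle z,y\rangle\ \text{for}\ y\in Q_M\setminus Q_{kL}.
\end{equation*}
The two pieces match along $\partial Q_{kL}$, so $w_M$ is admissible for $m_M(z)$.

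The heart of the proof is a three-way decomposition of $E(w_M, Q_M)$ according to the position of $(y, y+\xi)$: both in $Q_{kL}$ (type I), both in $Q_M\setminus Q_{kL}$ (type II), or mixed (type III). In type I, the integrand $a(\xi)(\widehat w_L(y+\xi)-\widehat w_L(y))^2$ is $L$-periodic in $y$, so summing over $k^d$ sub-cubes collapses to $k^d$ copies of the full-$E$ integral over $Q_L\cap E$, which Proposition \ref{small-ext} bounds by
\begin{equation*}
(m_L(z)+\eta)\Bigl(1 + \frac{C}{\delta_0^{2+d+\kappa}L^\kappa}\Bigr) + CL^{d-1}\delta_0|z|^2 + \frac{CL^{d-\kappa}}{\delta_0^{2+d+\kappa}}.
\end{equation*}
In type II, $w_M$ is linear on a slab of thickness at most $L/2$, giving at most $C|z|^2 M^{d-1}L$ via the second moment of $a$. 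In type III, $w_M(y+\xi)-w_M(y) = \langle z,\xi\rangle - \widetilde w_L(y)$ (when $y\in Q_{kL}$, with the symmetric case analogous); the $\langle z,\xi\rangle^2$ piece is again $O(|z|^2 M^{d-1}L)$, and the $\widetilde w_L(y)^2$ piece is nonzero only when $y$ is at distance $\ge \delta_0 L$ from $\partial Q_{kL}$, which forces $|\xi|\ge \delta_0 L$, so by \eqref{ass-aa} one gets a factor $\int_{|\xi|\ge \delta_0 L} a(\xi)\,d\xi \le C/(\delta_0 L)^{2+\kappa}$ in front of $k^d\int_{Q_L}\widetilde w_L^2\,dy$.

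Dividing by $M^d$ (using $k^d L^d \le M^d$) and letting first $M\to\infty$, then $\eta\to 0$, and finally $L\to\infty$ with $\delta_0$ held fixed (then taken small), all error terms vanish, provided $\int_{Q_L}\widetilde w_L^2\,dy \le C|z|^2 L^{d+2}$. This $L^2$ bound on $\widetilde w_L$ is the main obstacle and is obtained from a convolution-type Poincar\'e inequality on $Q_L$ for functions vanishing on a boundary strip of width $\delta_0 L$ (available under \eqref{bou_a}, as in \cite{2019BP}), applied to $\widetilde w_L$; the convolution energy of $\widetilde w_L$ is in turn bounded by $2(m_L(z)+\eta)+C|z|^2 L^d$ via $(\widetilde w_L(y+\xi)-\widetilde w_L(y))^2 \le 2(w_L(y+\xi)-w_L(y))^2 + 2\langle z,\xi\rangle^2$. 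Once this Poincar\'e bound is in place, the conclusion follows.
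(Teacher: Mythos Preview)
Your argument follows the same overall architecture as the paper's: periodic extension of a near-minimiser on $Q_L$, construction of a competitor on $Q_M$ by tiling, use of Proposition~\ref{small-ext} for the interior cells, and the Fekete-type conclusion $\limsup_M g(M)\le \liminf_L g(L)$.

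The one substantive difference is in how you handle your ``type III'' cross terms. You bound the $\widetilde w_L(y)^2$ contribution by
\[
\frac{C}{(\delta_0 L)^{2+\kappa}}\,k^d\!\int_{Q_L}\widetilde w_L^2\,dy,
\]
which after dividing by $M^d$ does \emph{not} vanish as $M\to\infty$; you then need the Poincar\'e bound $\int_{Q_L}\widetilde w_L^2\le C|z|^2L^{d+2}$ to kill it as $L\to\infty$. The paper instead observes that only the outermost layer of $L$-cells (roughly $(M/L)^{d-1}$ of them) interacts non-negligibly with the annulus $Q_M\setminus Q_{kL}$, since for deeper cells the decay of $a$ provides an extra summable factor. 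This yields a \emph{surface} prefactor $\sim (M/L)^{d-1}$ in front of $\int_{Q_L}\widetilde w_L^2$ (see estimates \eqref{est-13}--\eqref{est-14} and their appearance in \eqref{est-15}), so that after dividing by $M^d$ the term vanishes as $M\to\infty$ with $L$ and $w_L$ fixed, and no a~priori control on $\int_{Q_L}\widetilde w_L^2$ is needed at all.

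Your Poincar\'e route is valid (under \eqref{bou_a}, via the extension theorem and a scaled Poincar\'e inequality as you indicate), but it is an avoidable detour: what you call ``the main obstacle'' disappears if you refine the type III estimate by layering the $k^d$ sub-cubes according to their distance from $\partial Q_{kL}$.
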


\begin{proof}
With fixed $N\in\NN$ and $w$ as in \eqref{fhom-form} we extend $w$ to a $N$-periodic function with a slight abuse of notation.
Moreover, if $S\ge N$ we define
$$
w_S(y)=\begin{cases}
w(y) &\hbox{ if } y\in Q_{\lfloor {S\over N}\rfloor N}\\
\langle z,y\rangle &\hbox{ otherwise in } Q_S.
\end{cases}
$$

We can write
\begin{eqnarray}\label{estimone}
&&\nonumber
\int_{Q_S\cap E}\int_{\{\xi: y+\xi\in Q_S\cap E\}}
a(\xi)(w_S(y+\xi)-w_S(y))^2d\xi\,dy\\ \nonumber
&=&\sum_{\|j\|_\infty\le \lfloor {S\over N}\rfloor}
\int_{(jN+Q_N)\cap E}\int_{\{\xi: y+\xi\in Q_S\cap E\}}
a(\xi)(w_S(y+\xi)-w_S(y))^2d\xi\,dy\\ \nonumber
&&
+
\int_{(Q_S\setminus Q_{\lfloor {S\over N}\rfloor N})\cap E}\int_{\{\xi: y+\xi\in Q_S\cap E\}}
a(\xi)(w_S(y+\xi)-w_S(y))^2d\xi\,dy
\\ \nonumber
&=&\sum_{\|j\|_\infty\le \lfloor {S\over N}\rfloor}
\int_{(jN+Q_N)\cap E}\int_{\{\xi: y+\xi\in Q_{\lfloor {S\over N}\rfloor N+\delta N}\cap E\}}
a(\xi)(w(y+\xi)-w(y))^2d\xi\,dy\\ \nonumber
&&
+\sum_{\|j\|_\infty\le \lfloor {S\over N}\rfloor}
\int_{(jN+Q_N)\cap E}\int_{\{\xi: y+\xi\in (Q_S\setminus Q_{\lfloor {S\over N}\rfloor N+\delta N})\cap E\}}
a(\xi)(w_S(y+\xi)-w_S(y))^2d\xi\,dy\\ \nonumber
&&
+
\int_{(Q_S\setminus Q_{\lfloor {S\over N}\rfloor N})\cap E}\int_{\{\xi: y+\xi\in (Q_S\setminus Q_{\lfloor {S\over N}\rfloor N-\delta N})\cap E\}}
a(\xi)(w_S(y+\xi)-w_S(y))^2d\xi\,dy\\ \nonumber
&&
+\sum_{\|j\|_\infty\le \lfloor {S\over N}\rfloor-1}
\int_{(Q_S\setminus Q_{\lfloor {S\over N}\rfloor N})\cap E}\int_{\{\xi: y+\xi\in (jN+Q_N)\cap E\}}
a(\xi)(w_S(y+\xi)-w_S(y))^2d\xi\,dy\\
&& \nonumber
+\sum_{\|j\|_\infty= \lfloor {S\over N}\rfloor}
\int_{(Q_S\setminus Q_{\lfloor {S\over N}\rfloor N})\cap E}\int_{\{\xi: y+\xi\in Q_{\lfloor {S\over N}\rfloor N-\delta N}\cap (jN+Q_N)\cap E\}}
a(\xi)(w_S(y+\xi)-w_S(y))^2d\xi\,dy\\
\end{eqnarray}

By Proposition \ref{small-ext}, and the periodicity of $w$ we have
\begin{eqnarray}\label{est-11}
&&\nonumber
\int_{(jN+Q_N)\cap E}\int_{\{\xi: y+\xi\in Q_{\lfloor {S\over N}\rfloor N+\delta N}\cap E\}}
a(\xi)(w(y+\xi)-w(y))^2d\xi\,dy\\
&\le&
\Bigl(1+C\Bigl({1\over \delta^{2+d+\kappa} N^\kappa}
+ N^{d-1}\delta|z|^2\Bigr)\Bigr)
\int_{Q_N\cap E}\int_{\{\xi: y+\xi\in Q_N\cap E\}}
a(\xi)(w(y+\xi)-w(y))^2d\xi\,dy.\hskip1cm
\end{eqnarray}

Furthermore,
\begin{eqnarray}\label{est-12}
&&\nonumber
\int_{(Q_S\setminus Q_{\lfloor {S\over N}\rfloor N})\cap E}\int_{\{\xi: y+\xi\in (Q_S\setminus Q_{\lfloor {S\over N}\rfloor N-\delta N})\cap E\}}
a(\xi)(w_S(y+\xi)-w_S(y))^2d\xi\,dy\\
&=&\nonumber
\int_{(Q_S\setminus Q_{\lfloor {S\over N}\rfloor N})\cap E}\int_{\{\xi: y+\xi\in (Q_S\setminus Q_{\lfloor {S\over N}\rfloor N-\delta N})\cap E\}}
a(\xi)(\langle z,\xi\rangle)^2d\xi\,dy
\\
&\le &\nonumber
|Q_S\setminus Q_{\lfloor {S\over N}\rfloor N-\delta N}|  |z|^2\int_{\rr^d} a(\xi)|\xi|^2 d\xi
\\
&\le &
C S^{d-1}(N+\delta)|z|^2
\end{eqnarray}

Again, using the periodicity of $w(y)-\langle z,y\rangle$ and the decay assumption \eqref{ass-a} on $a$, we can estimate
\begin{eqnarray}\label{est-13}
&&\nonumber
\sum_{\|j\|_\infty\le \lfloor {S\over N}\rfloor-1}
\int_{(Q_S\setminus Q_{\lfloor {S\over N}\rfloor N})\cap E}\int_{\{\xi: y+\xi\in (jN+Q_N)\cap E\}}
a(\xi)(w_S(y+\xi)-w_S(y))^2d\xi\,dy\\
&\le&\nonumber
\sum_{\|j\|_\infty\le \lfloor {S\over N}\rfloor-1}\sum_{\|k\|_\infty= \lfloor {S\over N}\rfloor+1}
\int_{(kN+Q_N)\cap E}\int_{\{\xi: y+\xi\in (jN+Q_N)\cap E\}}
a(\xi)(w(y+\xi)-\langle z,y\rangle)^2d\xi\,dy\\
&\le&\nonumber
\sum_{\|j\|_\infty\le \lfloor {S\over N}\rfloor-1}\sum_{\|k\|_\infty= \lfloor {S\over N}\rfloor+1}
\int_{(kN+Q_N)\cap E}\int_{\{\xi: y+\xi\in (jN+Q_N)\cap E\}}
a(\xi)(w(y+\xi)-\langle z,y+\xi\rangle+\langle z,\xi\rangle)^2d\xi\,dy
\\
&\le&\nonumber 2
\sum_{\|j\|_\infty\le \lfloor {S\over N}\rfloor-1}\sum_{\|k\|_\infty= \lfloor {S\over N}\rfloor+1}
\int_{(kN+Q_N)\cap E}\int_{\{\xi: y+\xi\in (jN+Q_N)\cap E\}}
a(\xi)((w(y+\xi)-\langle z,y+\xi\rangle)^2+|z|^2|\xi|^2)d\xi\,dy
\\
&\le&\nonumber 2
\sum_{\|j\|_\infty\le \lfloor {S\over N}\rfloor-1}\sum_{\|k\|_\infty= \lfloor {S\over N}\rfloor+1}
\int_{Q_N\cap E}\int_{\{\xi: y+\xi\in Q_N\cap E\}}
a(\xi+(k-j)N)((w(y)-\langle z,y\rangle)^2+|z|^2|\xi+(k-j)N|^2)d\xi\,dy\\
&\le&
C\Bigl(\lfloor {S\over N}\rfloor+1\Bigr)^{d-1}
\Bigl(\int_{Q_N\cap E}|(w(y)-\langle z,y\rangle|^2+|z|^2N^d\Bigr)
\end{eqnarray}

As for the last term in \eqref{estimone}, we can use the same argument, noting that we have
$|\xi|\ge \delta N$,
which, using the decay estimate \eqref{ass-a} on $a$, gives
\begin{eqnarray}\label{est-14}\nonumber
&&\sum_{\|j\|_\infty= \lfloor {S\over N}\rfloor}
\int_{(Q_S\setminus Q_{\lfloor {S\over N}\rfloor N})\cap E}\int_{\{\xi: y+\xi\in Q_{\lfloor {S\over N}\rfloor N-\delta N}\cap (jN+Q_N)\cap E\}}
a(\xi)(w_S(y+\xi)-w_S(y))^2d\xi\,dy\\
&\le&
C\Bigl(\lfloor {S\over N}\rfloor+1\Bigr)^{d-1}{1\over \delta^{2+d+\kappa} N^{2+d+\kappa}}
\Bigl(\int_{Q_N\cap E}|(w(y)-\langle z,y\rangle|^2+|z|^2N^d\Bigr)
\end{eqnarray}

If for $t>0$ and $u\in L^2(Q_t)$ we set
$$
g_t(u)={1\over t^d}\int_{Q_t\cap E}\int_{\{\xi: y+\xi\in Q_t\cap E\}}
a(\xi)(u(y+\xi)-u(y))^2d\xi\,dy,
$$
then gathering estimates \eqref{est-11}--\eqref{est-14} we have
\begin{eqnarray}\label{est-15}\nonumber
g_S(w_S)&\le& {N^d\over S^d}\Bigl\lfloor {S\over N}\Bigr\rfloor^d \Bigl(1+{C\over \delta^{2+d+\kappa} N^\kappa} \Bigr)
g_N(w)+ {1\over N}\delta|z|^2+C {1\over S}(N+\delta)|z|^2\\
&&+\nonumber
C{1\over S^d}\Bigl(\Big\lfloor {S\over N}\Big\rfloor+1\Bigr)^{d-1}\Bigl(1+{1\over \delta^{2+d+\kappa} N^{2+d+\kappa}}\Bigr)
\Bigl(\int_{Q_N\cap E}|(w(y)-\langle z,y\rangle|^2+|z|^2N^d\Bigr).\\
\end{eqnarray}
If we set
\begin{eqnarray}f_S(z)&=&\nonumber
{1\over S^d}\min\Bigl\{\int_{Q_S\cap E}\int_{\{\xi: y+\xi\in Q_S\cap E\}}
a(\xi)(w(y+\xi)-w(y))^2d\xi\,dy: \\
&& \hskip2cm w(y)=\langle z,y\rangle\hbox{ on }Q_S\setminus Q_{(1-\delta)S} \hbox{ for some }\delta>0\Bigr\}\,,
\end{eqnarray}
this implies that
\begin{eqnarray}\label{est-16}\nonumber
\limsup_{S\to+\infty}f_S(z)\le
\Bigl(1+{C\over \delta^{2+d+\kappa} N^\kappa} \Bigr)
g_N(w)+ {1\over N}\delta|z|^2.
\end{eqnarray}
By the arbitrariness of $w$ and $\delta$ we then obtain
%\begin{eqnarray}\label{est-17}\nonumber
$\limsup\limits_{S\to+\infty}f_S(z)\le\liminf\limits_{N\to+\infty} f_N(z)$,
%\end{eqnarray}
which proves the claim.
\end{proof}

\begin{remark}\label{quaform}\rm
Note that $f_{\rm hom}$ is a quadratic form and hence there exists a symmetric matrix $A_{\rm hom}$ such that
\begin{equation}
f_{\rm hom}(z)= \langle A_{\rm hom}z,z\rangle.
\end{equation}
Indeed, let $w_1$ and $w_2$ be test functions in \eqref{fhom-form}
for $z_1$ an $z_2$ then using $w_1+w_2 $ and $w_1-w_2 $ as test functions we get the inequality
$$
2\Bigl(f_{\rm hom}(z_1)+f_{\rm hom}(z_2)\Bigr)\ge f_{\rm hom}(z_1+z_2)+f_{\rm hom}(z_1-z_2).
$$
A symmetric argument shows that the converse inequality, and then the equality, holds, giving the claim
(see \cite{BDF} Remark 7.12).
\end{remark}

\begin{remark}\label{rem-fhom}\rm
Thanks to Proposition \ref{small-ext} $f_{\rm hom}$ can be equivalently written
\begin{eqnarray}\label{fhom-form-2}
f_{\rm hom}(z)&=&\nonumber
\lim_{T\in\NN,\ T\to+\infty}{1\over T^d}\min\Bigl\{\int_{Q_T\cap E}\int_{\{\xi: y+\xi\in \rr^d\cap E\}}
a(\xi)(w(y+\xi)-w(y))^2d\xi\,dy: \\
&& w(y)-\langle z,y\rangle\  T\hbox{-periodic}, \
w(y)=\langle z,y\rangle\hbox{ on }Q_T\setminus Q_{(1-\delta)T} \hbox{ for some }\delta>0\Bigr\}\,,
\end{eqnarray}
where the second integral is extended to the whole $\rr^d\cap E$.
\end{remark}

\begin{proposition}\label{boundper}
Let  $f_0(z)$ be given by \eqref{cpformula}. Then $f_{\rm hom}\ge f_0$.
\end{proposition}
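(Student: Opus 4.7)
The plan is to construct, from a near-optimal competitor for the $N$-scale problem defining $f_{\rm hom}$, a $1$-periodic competitor for $f_0$ whose energy is at most $N^{-d}$ times the $N$-scale energy. Passing to the limit in $N$ then yields $f_{\rm hom}(z)\ge f_0(z)$. Crucially, we will use the representation of $f_{\rm hom}$ from Remark \ref{rem-fhom}, where the outer $\xi$-integral already runs over all of $\rr^d\cap E$.

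\textbf{Step 1 (averaged competitor).} Fix $N\in\NN$ and $\eta>0$, and pick an admissible $w_N$ in the minimization of Remark \ref{rem-fhom} with energy at most $N^d(f_{\rm hom}(z)+\eta)$. In particular $w_N-\langle z,\cdot\rangle$ is $N$-periodic. Let $I_N\subset\ZZ^d$ be a set of $N^d$ representatives of $\ZZ^d/(N\ZZ^d)$ whose unit translates $(0,1)^d+j$ tile $Q_N$ up to a null set. Define
$$
\tilde w(y):=\frac{1}{N^d}\sum_{j\in I_N}\bigl(w_N(y+j)-\langle z,j\rangle\bigr).
$$
Using the $N$-periodicity of $w_N-\langle z,\cdot\rangle$ and reindexing $j\mapsto j+e_k$ modulo $N$, a direct check shows $\tilde w(y+e_k)-\tilde w(y)=\langle z,e_k\rangle$, so $\tilde w-\langle z,\cdot\rangle$ is $1$-periodic and $\tilde w$ is admissible for $f_0(z)$.

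\textbf{Step 2 (Jensen and unfolding).} The $\langle z,j\rangle$ terms cancel in the difference, so
$$
\tilde w(y+\xi)-\tilde w(y)=\frac{1}{N^d}\sum_{j\in I_N}\bigl(w_N(y+\xi+j)-w_N(y+j)\bigr),
$$
and by convexity of $s\mapsto s^2$,
$$
\bigl(\tilde w(y+\xi)-\tilde w(y)\bigr)^2\le \frac{1}{N^d}\sum_{j\in I_N}\bigl(w_N(y+\xi+j)-w_N(y+j)\bigr)^2.
$$
Integrating against $a(\xi)$ over $y\in(0,1)^d\cap E$ with $y+\xi\in E$, substituting $y'=y+j$ in each summand, and using that $E$ is $\ZZ^d$-periodic (so that $((0,1)^d+j)\cap E=((0,1)^d\cap E)+j$ and the condition $y+\xi\in E$ becomes $y'+\xi\in E$), the $j$-th term becomes the energy integrated over $((0,1)^d+j)\cap E$ in $y'$ with $\xi$ ranging over $\{\xi:y'+\xi\in E\}$. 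The union over $j\in I_N$ tiles $Q_N\cap E$, yielding
$$
\int_{(0,1)^d\cap E}\!\int_{E}a(\xi)\bigl(\tilde w(y+\xi)-\tilde w(y)\bigr)^2 d\xi\,dy\le \frac{1}{N^d}\int_{Q_N\cap E}\!\int_{\rr^d\cap E}a(\xi)\bigl(w_N(y+\xi)-w_N(y)\bigr)^2 d\xi\,dy.
$$

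\textbf{Step 3 (conclusion).} The left-hand side is an upper bound for $f_0(z)$, while the right-hand side is at most $f_{\rm hom}(z)+\eta$ by the choice of $w_N$ together with Remark \ref{rem-fhom}. Sending $N\to\infty$ and $\eta\to 0$ gives $f_0(z)\le f_{\rm hom}(z)$, which is the claim.

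The main non-trivial point is the Jensen/unfolding step: one must ensure that after the change of variables $y'=y+j$ the outer $\xi$-domain $\{y+\xi\in E\}$ transforms consistently into $\{y'+\xi\in E\}$, and that the union of shifted unit cells reconstitutes $Q_N\cap E$ without overcounting. Both facts rest on the $\ZZ^d$-periodicity of $E$; once this is set up, the bound follows. Verifying $1$-periodicity of $\tilde w-\langle z,\cdot\rangle$ is a short reindexing argument relying on the fact that $I_N$ is a full set of residues modulo $N$.
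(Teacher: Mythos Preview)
Your argument is correct and is essentially the paper's own proof: average the $N$-periodic competitor over the $N^d$ integer shifts to obtain a $1$-periodic test function, then use convexity (Jensen) together with the $\ZZ^d$-periodicity of $E$ to bound its cell energy by $N^{-d}$ times the $N$-scale energy. The only cosmetic slip is that the cubes $(0,1)^d+j$ with $j\in\ZZ^d$ cannot literally tile $Q_N=[-N/2,N/2]^d$ when $N$ is odd, but this is irrelevant since the integrand is $N$-periodic in $y$ and any fundamental domain for $N\ZZ^d$ (e.g.\ $[0,N)^d$) yields the same value.
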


\begin{proof}
By Proposition \ref{def-fhom} we have
\begin{eqnarray}\label{asy-phom}
f_{\rm hom}(z)&\ge&\nonumber
\limsup_{N\in\NN,\ N\to+\infty}{1\over N^d}\min\Bigl\{\int_{Q_N\cap E}\int_{\{\xi: y+\xi\in E\}}
a(\xi)(w(y+\xi)-w(y))^2d\xi\,dy: \\
&& \hskip2cm w(y)-\langle z,y\rangle  \ N\hbox{-periodic}\Bigr\}\,.
\end{eqnarray}

We now show that the right-hand side of \eqref{asy-phom} is equal to $f_0(z)$.
It suffices to show one inequality, the other being trivial since $1$-periodic functions are also $N$-periodic. To that end, given $w$ test function for the problem in \eqref{asy-phom} it suffices to construct the $1$-periodic function
$$
\overline w(y)={1\over N^d}\sum_{k\in\{1,\ldots, N\}^d}w(y+k),
$$
and note that, by periodicity and convexity,
\begin{eqnarray}\label{c-phom}
&&\nonumber
\int_{Q_1\cap E}\int_{\{\xi: y+\xi\in E\}}
a(\xi)(\overline w(y+\xi)-\overline w(y))^2d\xi\,dy\\
&=& \nonumber
{1\over N^d}
\int_{Q_N\cap E}\int_{\{\xi: y+\xi\in E\}}
a(\xi)(\overline w(y+\xi)-\overline w(y))^2d\xi\,dy\\
&\le&\nonumber
{1\over N^d}{1\over N^d}\sum_{k\in\{1,\ldots, N\}^d}
\int_{Q_N\cap E}\int_{\{\xi: y+\xi\in E\}}
a(\xi)(w(y+k+\xi)-w(y+k))^2d\xi\,dy
\\
&=&\nonumber
{1\over N^d}{1\over N^d}\sum_{k\in\{1,\ldots, N\}^d}
\int_{Q_N\cap E}\int_{\{\xi: y+\xi\in E\}}
a(\xi)(w(y+\xi)-w(y))^2d\xi\,dy
\\
&=&
{1\over N^d}
\int_{Q_N\cap E}\int_{\{\xi: y+\xi\in E\}}
a(\xi)(w(y+\xi)-w(y))^2d\xi\,dy.
\end{eqnarray}
This proves the claim.
%We now finally show that \eqref{cpformula} defines a function larger than $f_{\rm hom}(z)$.
%To that end it suffices to note that if $w$ is a test function for \eqref{cpformula}, then
%$u_\e(x)=\e\,w(x/\e)$ tends to $u(x)=\langle z, x\rangle$, so that the liminf inequality applied
%to $\Omega=(0,1)^d$ gives
%$$
%f_{\rm hom}(z)= F_{\rm hom}(u)\le \liminf_{\e\to 0} F_\e(u_\e)\le f_0(z)
%$$
%as desired.
\end{proof}

\begin{proposition}[upper bound]\label{ubound}
Let  $f_0$ be given by \eqref{cpformula}. Then for all $u\in H^1(\Omega)$ there exists a sequence $u_\e\to u$ such that
\begin{equation}\label{ubeq}
\limsup_{\e\to 0}F_\e(u_\e)\le \int_\Omega f_0(\nabla u)\dx.
\end{equation}
\end{proposition}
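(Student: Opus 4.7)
The plan is to build the recovery sequence in three stages: affine targets, piecewise affine targets, and general $H^1$ targets by density. For an affine $u(x)=\langle z,x\rangle$, I would fix $\eta>0$ and take a near-optimal test function $w$ for the cell problem \eqref{cpformula} defining $\langle A_{\rm hom}z,z\rangle=f_0(z)$, so that $w-\langle z,\cdot\rangle$ is $1$-periodic and its cell-problem energy is at most $f_0(z)+\eta$. Setting $u_\e(x)=\e\, w(x/\e)$ on $\Omega\cap\e E$, the boundedness of $w-\langle z,\cdot\rangle$ gives $u_\e\to u$ in the sense of Definition \ref{de-con}. After the rescaling $\tilde x=x/\e$, $\tilde y=y/\e$, the $1$-periodicity of $w-\langle z,\cdot\rangle$ together with the decay \eqref{ass-aa} permits partitioning the rescaled domain $\e^{-1}(\Omega\cap\e E)$ into unit cubes, each contributing essentially the cell-problem integral; this yields $\limsup_\e F_\e(u_\e)\le|\Omega|(f_0(z)+\eta)$, and sending $\eta\to 0$ via a diagonal argument handles the affine case.

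For piecewise affine $u$ on a polyhedral partition $\{\Omega_i\}_{i=1}^M$ of $\Omega$ with $\nabla u=z_i$ on $\Omega_i$, I would apply the construction above on each piece to obtain local recovery sequences $u_\e^i$, and glue them into a single $u_\e$ converging to $u$ in $L^2_{\rm loc}(\Omega)$ by means of Proposition \ref{bvp} (with $b_\e$ as in \eqref{be=a}), applied on a thin strip of width $\delta>0$ around each interface. Interactions within a single $\Omega_i$ reduce to the affine case, while cross-piece interactions are controlled by the decay \eqref{ass-aa} along the lines of Proposition \ref{small-ext}. Summing over $i$ and letting $\delta\to 0$ yields $\limsup_\e F_\e(u_\e)\le\int_\Omega f_0(\nabla u)\,dx$.

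For a general $u\in H^1(\Omega)$, I would approximate $u$ by piecewise affine functions $u_k\to u$ in $H^1(\Omega)$. Because $f_0$ is a continuous quadratic form by Remark \ref{quaform}, $\int_\Omega f_0(\nabla u_k)\,dx\to\int_\Omega f_0(\nabla u)\,dx$, and a standard diagonal extraction from the recovery sequences produced in the previous step yields the required $u_\e\to u$ satisfying \eqref{ubeq}.

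The delicate step is the gluing in Step 2: unlike in local homogenization, the long-range kernel couples all pieces at every distance, so the boundary-matching cost of Proposition \ref{bvp} (of order $(N/\delta)^{2+d+\kappa}\e^\kappa$) must be balanced against the sum of cross-piece interactions estimated via \eqref{ass-aa}. This is exactly the balance that the decay exponent $\kappa>0$, slightly stronger than mere finiteness of the second moment of $a$, is engineered to afford; one must therefore send $\e\to 0$ before $\delta\to 0$.
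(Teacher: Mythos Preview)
Your proposal is correct and follows essentially the same route as the paper: reduce to piecewise affine $u$ by density (using that $f_0$ is quadratic, hence the limit functional is $H^1$-continuous), take scaled cell-problem test functions $u_\e^i(x)=\e\,w_i(x/\e)$ on each piece, modify them near the interfaces via Proposition~\ref{bvp} so they agree with the affine map on a $\delta$-strip, and then estimate the cross-piece interactions directly from the decay~\eqref{ass-aa}, sending $\e\to0$ before $\delta\to0$. One small imprecision: you invoke ``the boundedness of $w-\langle z,\cdot\rangle$'' to obtain $u_\e\to u$, but near-optimal test functions for \eqref{cpformula} are only known to be $L^2$-periodic, not bounded; the convergence still holds since $\e$ times a periodic $L^2$ function tends to $0$ in $L^2_{\rm loc}$.
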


\begin{proof}
By a diagonalization  argument it suffices to show that for all $\eta>0$ there exists $u_\e\to u$ such
that
$$
\limsup_{\e\to 0}F_\e(u_\e)\le \int_\Omega f_0(\nabla u)\dx+ \eta.
$$

Note that the same argument as in Remark \ref{quaform} shows that $f_0$ is a quadratic form, so that the
right-hand side of \eqref{ubeq} defines a strongly continuous functional in $H^1(\Omega)$.
Then, by a standard density argument (see \cite{GCB} Remark 1.29) it suffices to consider the case when $u$ is piecewise affine.
We only consider the case when  $\Omega$ is bounded and the gradient of $u$ takes two values, since the general case only results in a heavier notation.

Upon a translation and reflection argument, we may assume  that
$$
u(x)=\min\{ \langle z_1,x\rangle,  \langle z_2,x\rangle\}.
$$
For $i\in\{1,2\}$, we then set
\begin{eqnarray*}
S=\{x\in\Omega: \langle z_1-z_2,x\rangle=0\},
\qquad
\Omega_i=\{x\in\Omega: u(x)=\langle z_i,x\rangle\}\setminus S.
\end{eqnarray*}

We may suppose that $w_i$ are functions such that $w_i(y)-\langle z_i,x\rangle$ is $1$-periodic and
$$
\int_{Q_1\cap E}\int_{E} a(x-y)(w_i(x)-w_i(y))^2 dx\,dy= f_0(z_i).
$$
Then, the functions
$$
u^i_\e(x)= \e\, w_i\Bigl({x\over\e}\Bigr)
$$
converge to $\langle z_i,x\rangle$ in $\Omega_i$ as $\e\to0$, and, if  we set
\begin{eqnarray}\label{sep-def}
F^i_\e(u^i_\e):=\int_{\Omega_i\cap\e E}\int_{{1\over\e}((\Omega_i\cap\e E)-x)} a(\xi)
\Bigl({u^i_\e(x+\e\xi)-u^i_\e(x)\over\e}\Bigr)^2d\xi \dx,
\end{eqnarray}
then we have
\begin{eqnarray}\label{sep-est}
\limsup_{\e\to0}F^i_\e(u^i_\e)\le |\Omega_i|f_0(z_i).
\end{eqnarray}

To check this, for all $\e>0$ define the sets of indices
\begin{eqnarray*}
I^\e_i&=&\{k\in  \ZZ^d: \e (k+Q)\cap \Omega_i\neq\emptyset\},
\end{eqnarray*}
for $i\in\{1,2\}$. Then
\begin{eqnarray*}
F^i_\e(u^i_\e)&\le&\sum_{k\in I^\e_i}\int_{\e (k+Q)\cap\e E}\int_{{1\over\e}(\Omega_i\cap\e E-x)} a(\xi)
\Bigl({u^i_\e(x+\e\xi)-u^i_\e(x)\over\e}\Bigr)^2d\xi \dx\\
&=&\sum_{k\in I^\e_i}\e^d\int_{ (k+Q)\cap E}\int_{{1\over\e}\Omega_i\cap E-y} a(\xi)
(w_i(y+\xi)-w_i(y))^2d\xi \,dy\\
&\le&\sum_{k\in I^\e_i}\e^d\int_{ (k+Q)\cap E}\int_{E-y} a(\xi)
(w_i(y+\xi)-w_i(y))^2d\xi \,dy
\\
&=&\sum_{k\in I^\e_i}\e^d\int_{Q\cap E}\int_{E} a(x-y)
(w_i(x)-w_i(y))^2d\xi \,dy
\\
&=&\# I^\e_i \e^d f_0(z_i)= |\Omega_i|f_0(z_i) +o(1)
\end{eqnarray*}
as $\e\to 0$.

Given $\delta>0$ we may modify each sequence $\{u^i_\e\}$ to a new sequence
$\{u^{\delta,i}_\e\}$ such that
\begin{eqnarray*}
 u^{\delta,i}_\e(x)= \langle z_i,x\rangle \hbox{ if dist}(x,S)<\delta, x\in \Omega_i,&&
 u^{\delta,i}_\e(x)= u^{i}_\e(x) \hbox{ if dist}(x,S)\ge 2\delta, x\in \Omega_i,
\\
 \int_{\Omega_1}|u^{\delta,1}_\e(x)|^2dx+\int_{\Omega_2}|u^{\delta,2}_\e(x)|^2dx\le  C,&\hbox{ and }&
 \limsup_{\e\to 0}(F^i_\e(u^{\delta,i}_\e)-
F^i_\e(u^i_\e))= o(1)
\end{eqnarray*}
as $\delta\to0$. This can be done using the construction of Proposition \ref{bvp},
with $A$ the intersection of a large ball containing $\Omega$ with each of the two half spaces with $S$ as boundary.

We then define the sequence
$$
u_\e(x)=u^{\delta,i}_\e \hbox{ if } x\in \Omega_i, \ i\in\{1,2\}.
$$
The only thing to check now is that
\begin{eqnarray}\label{sep-def}
\limsup_{\e\to0}\int_{\Omega_1\cap\e E}\int_{{1\over\e}(\Omega_2\cap\e E-x)} a(\xi)
\Bigl({u^{2,\delta}_\e(x+\e\xi)-u^{1,\delta}_\e(x)\over\e}\Bigr)^2d\xi \dx= o(1)
\end{eqnarray}
as $\delta\to0$. To that end, first set
$$
\Omega_i^\delta=\{x\in\Omega_i: \hbox{dist}(x,S)>\delta\},
$$
and estimate
\begin{eqnarray*}
&&\int_{\Omega_1^\delta\cap\e E}\int_{{1\over\e}(\Omega_2\cap\e E-x)} a(\xi)
\Bigl({u^{2,\delta}_\e(x+\e\xi)-u^{1,\delta}_\e(x)\over\e}\Bigr)^2d\xi \dx\\
&\le&\int_{\Omega_1^\delta\cap\e E}\int_{{1\over\e}(\Omega_2\cap\e E-x)} {1\over\e^2} a(\xi)
(|u^{2,\delta}_\e(x+\e\xi)|^2+|u^{1,\delta}_\e(x)|^2)d\xi \dx
\\
&\le&\Bigl(\int_{\Omega_1}|u^{1,\delta}_\e(x)|^2\dx+\int_{\Omega_2}|u^{2,\delta}_\e(x)|^2\dx\Bigr)
 {1\over\e^2}\int_{\{|\xi|>\delta/\e\}} a(\xi)d\xi
\\
&\le&C{\e^\kappa\over \delta^{2+\kappa}}.
\end{eqnarray*}
In the same way we estimate the term
\begin{eqnarray*}
&&\int_{(\Omega_1\setminus \Omega_1^\delta)\cap\e E}\int_{{1\over\e}(\Omega_2^\delta\cap\e E-x)} a(\xi)
\Bigl({u^{2,\delta}_\e(x+\e\xi)-u^{1,\delta}_\e(x)\over\e}\Bigr)^2d\xi \dx\\
&\le&\int_{\Omega_1\cap\e E}\int_{{1\over\e}(\Omega_2^\delta\cap\e E-x)} a(\xi)
\Bigl({u^{2,\delta}_\e(x+\e\xi)-u^{1,\delta}_\e(x)\over\e}\Bigr)^2d\xi \dx\\
&\le&C{\e^\kappa\over \delta^{2+\kappa}}.
\end{eqnarray*}
Finally, using the Lipschitz continuity of $u$, we have
\begin{eqnarray*}
&&\int_{(\Omega_1\setminus \Omega_1^\delta)\cap\e E}\int_{{1\over\e}((\Omega_2\setminus \Omega_2^\delta)\cap\e E-x)} a(\xi)
\Bigl({u^{2,\delta}_\e(x+\e\xi)-u^{1,\delta}_\e(x)\over\e}\Bigr)^2d\xi \dx\\
&=&\int_{(\Omega_1\setminus \Omega_1^\delta)\cap\e E}\int_{{1\over\e}((\Omega_2\setminus \Omega_2^\delta)\cap\e E-x)} a(\xi)
\Bigl({u(x+\e\xi)-u(x)\over\e}\Bigr)^2d\xi \dx\\
&\le&C\int_{(\Omega_1\setminus \Omega_1^\delta)}\int_{{1\over\e}((\Omega_2\setminus \Omega_2^\delta)-x)} a(\xi)
|\xi|^2d\xi \dx\\
&\le&C\delta\int_{\rr^d} a(\xi)
|\xi|^2d\xi \dx= C\delta
\end{eqnarray*}
Gathering the previous estimates and letting $\e\to0$ we obtain
\begin{eqnarray*}
\limsup_{\e\to0}\int_{\Omega_1\cap\e E}\int_{{1\over\e}(\Omega_2\cap\e E-x)} a(\xi)
\Bigl({u^{2,\delta}_\e(x+\e\xi)-u^{1,\delta}_\e(x)\over\e}\Bigr)^2d\xi \dx\le C\delta,
\end{eqnarray*}
which proves \eqref{sep-def}.
\end{proof}

%We conclude this section by showing that if condition \eqref{bou_a} is not satisfied then the homogenization theorem may not hold.
%
%\begin{example}\label{degex} \rm
%%We consider an example of a sequence of functionals with $a$ not satisfying \eqref{bou_a}.
%We fix $\delta<{1\over 4}$, consider the $1$-periodic connected set $E$ in $\rr^2$ such that
%$$
%E\cap Q_1= Q_1\setminus Q_{1-\delta},
%$$
%and set
%$$
%a=\chi_{\bigl((0,{1\over 2})+Q_\delta\bigr)}.
%$$
%Note that the set
%$$
%\{x\in\rr^2: \hbox{there exists }\xi \hbox{ such that } a(x+\xi)\neq 0\}= E\cap \bigl(E+\bigl(0,\hbox{${1\over 2}$}\bigr)+Q_\delta\bigr)$$
%is contained in the collection of horizontal stripes
%$\{x\in\rr^2: {\rm dist}(x_2,\ZZ)<2\delta\}$.
%This implies that functions which are constant in each of those stripes have zero energy, and as a consequence the $\Gamma$-limit is zero on all $L^2$ functions $u=u(x_1,x_2)=u(x_2)$ independent of the first variable and in particular it cannot be represented as in
%\eqref{homfuneq}. \end{example}

\subsection*{Acknowledgments.}
The authors acknowledge the MIUR Excellence Department Project awarded to the Department of Mathematics, University of Rome Tor Vergata, CUP E83C18000100006.

\end{document}